\theoremstyle{plain} 
\newtheorem{theorem}{Theorem}[section] 
\newtheorem{lemma}[theorem]{Lemma}
\newtheorem{corollary}[theorem]{Corollary}
\newtheorem{proposition}[theorem]{Proposition}
\theoremstyle{definition} 
\newtheorem{remark}[theorem]{Remark}
\newtheorem{example}[theorem]{Example}
\newtheorem{problem}[theorem]{Problem}
\newcommand{\affine}{\mathbb{C}}
\newcommand{\Cr}{\mathrm{Cr}}
\newcommand{\rank}{\mathrm{rank}}
\newcommand{\norm}[1]{\left|\!\left|#1\right|\!\right|}
\newcommand{\hess}{\mathrm{Hess}}
\def\RR{\mathbb{R}}
\def\ZZ{\mathbb{Z}}
\def\TT{\mathbb{T}}
\def\cU{{\mathcal U}}
\def\cV{{\mathcal V}}
\def\cD{{\mathcal D}}
\def\cH{{\mathcal H}}
\def\cT{{\mathcal T}}
\def\ra{{\rightarrow}}
\def\del{\partial}
\def\ev{\text{ev}}
\DeclareMathOperator{\Diff}{Diff}
\DeclareMathOperator{\Int}{Int}
\DeclareMathOperator{\Id}{Id}
\newcommand\cl[1]{{\overline{#1}}}
\begin{document}

\title[Growth of critical points]
{Growth of critical points in one-dimensional lattice systems} 

\author[M. Asaoka, T. Fukaya, K. Mitsui, M. Tsukamoto]
{Masayuki Asaoka, Tomohiro Fukaya, Kentaro Mitsui, Masaki Tsukamoto}

\begin{abstract}
We study the growth of the numbers of critical points in 
one-dimensional lattice systems
by using (real) algebraic geometry and the theory of 
homoclinic tangency.
\end{abstract}

\subjclass[2010]{57R70, 37C35}

\keywords{critical point, Morse function, algebraic geometry, real algebraic geometry, homoclinic tangency,
generating function}

\date{\today}


\maketitle

\section{Introduction: One-dimensional lattice system}

Let $M$ be a compact connected $C^\infty$ manifold without boundary.
Let $f:M\times M\to \mathbb{R}$ be a $C^\infty$ function.
For positive integers $n$, we define $f_n:M^{n+1}\to \mathbb{R}$ by setting 
\begin{equation} \label{eq: definition of f_n}
 f_n(p_1, p_2, \dots, p_{n+1}) := \sum_{i=1}^n f(p_i, p_{i+1}).
\end{equation}
Bertelson-Gromov \cite{Bertelson-Gromov} proposed the study of this kind of functions.
(See also Bertelson \cite{Bertelson}.)
Let $\mathrm{Cr}(f_n)$ be the set of critical points of $f_n$.
We are interested in the asymptotic behavior of $\Cr(f_n)$ as $n\to \infty$.
(The paper \cite{Fukaya-Tsukamoto} studies an asymptotic behavior of critical values of 
$f_n$.)

Naively speaking, the study of $f_n$ is a model of a 1-dimensional crystal (lattice system) 
which consists of $n+1$ particles.
We assume that the manifold $M$ is the configuration space of each particle and that 
$f(x,y)$ is the potential function describing the interaction between two adjacent particles.
Then $f_n$ is the total potential energy of the system,
and the critical points of $f_n$ are its stationary states.
 
Our view point and methods are motivated by the works of Artin-Mazur \cite{Artin-Mazur} and 
Kaloshin \cite{Kaloshin 1, Kaloshin 2}.
They studied the growth of periodic points of diffeomorphisms of manifolds
by using (real) algebraic geometry and the theory of homoclinic tangency.
We develop analogous methods for the study of $\mathrm{Cr}(f_n)$.

Let $C^\infty(M\times M)$ be the space of real valued $C^\infty$ functions in $M\times M$ 
with the $C^\infty$ topology.
Our first main result is the following theorem.
(Recall that a smooth function on a manifold is called a Morse function if all its critical points
are non-degenerate.)
\begin{theorem} \label{thm: main theorem}
There exists a dense subset $\mathcal{D} \subset C^\infty(M\times M)$ such that every 
$f\in \mathcal{D}$ satisfies the following two conditions:

\noindent 
(i) For all positive integers $n$, the functions $f_n:M^{n+1}\to \mathbb{R}$ are Morse functions.

\noindent 
(ii) There exists a positive real number $d$ (which depends on $f$) such that for all $n\geq 1$
\[ \#\Cr (f_n) \leq d^n.\]
Here $\#\Cr(f_n)$ is the number of the critical points of $f_n$.
\end{theorem}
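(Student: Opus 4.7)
My plan is to adapt the Artin--Mazur strategy to this generating-function setting in three steps: first approximate $f$ in $C^\infty$ by a function whose critical point equations are polynomial; then bound $\#\Cr(f_n)$ by a B\'ezout-type inequality that exploits the tridiagonal chain structure of the system $df_n = 0$; and finally superimpose the Morse condition by a transversality/Baire argument carried out within the polynomial approximations.

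For the algebraic step I would invoke the Nash--Tognoli theorem to realize $M$ as a compact nonsingular real algebraic subvariety of some $\mathbb{R}^N$. Restrictions of polynomials on $\mathbb{R}^N\times\mathbb{R}^N$ form a $C^\infty$-dense subalgebra of $C^\infty(M\times M)$ (density in each $C^k$ following from the usual polynomial approximation of derivatives), so it is enough to locate $\mathcal{D}$ inside the space $\mathcal{P}$ of such polynomial restrictions. For $f = P\vert_{M\times M}$ with $\deg P = D$, the set $\Cr(P_n)\subset M^{n+1}\subset\mathbb{R}^{N(n+1)}$ is cut out by the Lagrange-multiplier system
\[ \partial_y P(p_{i-1},p_i)+\partial_x P(p_i,p_{i+1}) \;=\; \sum_{j}\lambda_{ij}\,\nabla g_j(p_i), \qquad g_j(p_i)=0, \]
where $g_1,\dots,g_k$ are defining polynomials of $M\subset\mathbb{R}^N$ and the evident modifications are used at $i=1,n+1$. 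The key structural observation is that the $i$-th block involves only $(p_{i-1},p_i,p_{i+1},\lambda_{i\cdot})$ and has degree bounded purely in terms of $D$ and the $g_j$. Thus one has $(N+k)(n+1)$ polynomial equations in as many unknowns, with degrees bounded independently of $n$, and a classical B\'ezout bound on complex solutions yields $\#\Cr(P_n)\le C\cdot D_0^{\,n+1}$ with $C, D_0$ depending only on $P$ and $M$; this is of the required form $d^n$ after absorbing the constant into the base.

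For the Morse step, set $\mathcal{R}_n=\{f\in C^\infty(M\times M):f_n\text{ is Morse}\}$. Each $\mathcal{R}_n$ is $C^\infty$-open (the Morse property is $C^2$-open on the compact $M^{n+1}$ and $f\mapsto f_n$ is continuous) and should be dense by a standard transversality argument: at a critical point $(p_1,\dots,p_{n+1})$ of $f_n$, a small perturbation of $f$ supported near a pair $(p_i,p_{i+1})$ alters the corresponding $2m\times 2m$ block of $\hess(f_n)$, giving enough directions to eliminate a potential degeneracy provided the consecutive pairs are generically distinct, with the coincident case handled by an additional dimension count. Hence $\mathcal{R}=\bigcap_n\mathcal{R}_n$ is residual. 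To combine the two ingredients, given $f$ and $\varepsilon>0$ I approximate $f$ by some $P_0\in\mathcal{P}$ of degree $D$, and then perturb $P_0$ inside a slightly larger finite-dimensional polynomial space $V$: the condition ``$P_n$ is Morse'' is Zariski-open on $V$, so a Sard/Baire argument produces $P\in V\cap\mathcal{R}$ within $\varepsilon$ of $P_0$. The set $\mathcal{D}$ of all such $P$ is $C^\infty$-dense and satisfies (i) and (ii) by the bound above.

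The genuine technical obstacle I anticipate is the B\'ezout step, specifically obtaining degree bounds that are uniform in $n$: one must verify that introducing Lagrange multipliers does not inflate the degrees and that the chain structure of the equations is faithfully preserved when the system is written globally on $M^{n+1}$. Once a system of $(N+k)(n+1)$ polynomial equations of bounded degree is in place, the exponential bound follows from classical B\'ezout, and the Morse/transversality ingredient together with the Baire combination step should present no conceptual difficulty.
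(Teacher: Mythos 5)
Your high-level plan (Nash--Tognoli realization of $M$, polynomial approximation, B\'ezout, then layer on Morse genericity) is in the same spirit as the paper, but there is a genuine gap in the B\'ezout step and you have misdiagnosed where the difficulty lies. Writing $\Cr(P_n)$ as the solution set of $(N+k)(n+1)$ polynomial equations of bounded degree does \emph{not} by itself give a finite bound. B\'ezout counts isolated solutions of a zero-dimensional \emph{complex projective} variety; for an arbitrary polynomial $P$ the complexified Lagrange system can have positive-dimensional components (so no finite count exists) and can acquire extra solutions at infinity after homogenization (so even when finite, the projective B\'ezout number need not bound the affine real count in a useful way). Real Morseness of $P_n|_{M^{n+1}}$ does \emph{not} cure this: the real critical set being finite and non-degenerate says nothing about the dimension of the complexified critical scheme. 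Your ``technical obstacle'' --- that Lagrange multipliers might inflate degrees --- is not the real problem; degrees are trivially uniform in $n$. What is missing is a proof that one can choose $P$ from a $C^\infty$-dense set so that, \emph{for every $n$ simultaneously}, (a) the complexified critical equations have only isolated solutions, and (b) none of them escape to the hyperplane at infinity.

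This is precisely the content the paper has to establish and it is nontrivial. The paper sets up a projective algebraic model of $M$ via Nash--Tognoli--King plus Hironaka so that the complex variety $X$ meets the hyperplane $\{X_0=0\}$ transversally (Proposition \ref{prop: algebraic model of our manifold}), chooses a hypersurface $\Psi$ via Bertini and a Sard argument so that the decoupled function $\varphi(\mathbf{x},\mathbf{y})=\Psi(1,\mathbf{x})+\Psi(1,\mathbf{y})$ has, for \emph{all} $n$, no critical points at infinity and only non-degenerate complex critical points (Lemmas \ref{lemma: existence of good function, first step}, \ref{lemma: existence of good function, iterated}), and then shows via completeness of projective space that the sets $V_{N,n}$ and $U_{N,n}$ of polynomials with these properties are Zariski-open (Lemmas \ref{lemma: V_{N,n} is Zariski open}, \ref{U_{N,n} is non-empty Zariski open}), hence real-dense, hence their countable intersection $U_N^{\mathbb{R}}$ is dense. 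Only after all this is B\'ezout invoked. Your proposal replaces this entire chain by an assertion that ``classical B\'ezout yields $\#\Cr(P_n)\le C\cdot D_0^{n+1}$ with $C,D_0$ depending only on $P$ and $M$,'' which as stated is false for general polynomial $P$; and the suggested fix (check that the transversality/Baire step produces a $P$ that also satisfies the B\'ezout hypotheses) is not justified, since the locus of $P$ satisfying (a) and (b) above for all $n$ is exactly what needs to be shown to be non-empty and dense, and openness of the complex-projective conditions requires the completeness argument, not a $C^\infty$ transversality argument.
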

From the condition (i) in Theorem \ref{thm: main theorem} and the Morse inequality 
(\cite{Milnor}), we have 
\[ \#\Cr(f_n) \geq \sum_{k\geq 0} \dim (H^k(M^{n+1};\mathbb{Z}_2)) = \left(\sum_{k\geq 0} \dim H^k(M;\mathbb{Z}_2)\right)^{n+1}\]
for all $f\in \mathcal{D}$.
Hence, from the condition (ii), 
\[  \left(\sum_{k\geq 0} \dim H^k(M;\mathbb{Z}_2)\right)^{n+1} 
  \leq \#\Cr(f_n) \leq d^n \quad (f\in \mathcal{D}, n\geq 1).\]
Therefore, if $\dim M\geq 1$, $\#\Cr(f_n)$ has an exponential growth for every $f\in \mathcal{D}$.
\begin{remark}
For each $n\geq 1$, the condition that $f_n$ is a Morse function is an open condition for 
$f\in C^\infty(M\times M)$.
Therefore Theorem \ref{thm: main theorem} (condition (i)) implies that the set 
\[ \{f\in C^\infty(M\times M)|\, \text{All $f_n$ $(n\geq 1)$ are Morse functions}\} \]
is a residual subset of $C^\infty(M\times M)$.
(Recall that a subset of a topological space is said to be residual if 
it contains a countable intersection of open dense subsets.)
This fact was already proved in Asaoka-Fukaya-Tsukamoto \cite[Theorem 1.2]{Asaoka-Fukaya-Tsukamoto}.
The argument in the present paper is totally different from that in \cite{Asaoka-Fukaya-Tsukamoto}.
The argument in \cite{Asaoka-Fukaya-Tsukamoto} is much more elementary.
It uses only elementary results in differential topology.
On the other hand, the argument of the 
present paper uses two very big theorems: Nash-Tognoli-King's theorem in real algebraic geometry 
(\cite{Nash}, \cite{Tognoli}, \cite{King}) and Hironaka's resolution of singularities (\cite{Hironaka}).
The important point is that we can achieve the condition (ii) in Theorem \ref{thm: main theorem}.
This is the new point of the present paper.
\end{remark}

By the above remark, the sets $\Cr(f_n)$ are finite sets for generic (i.e. residual) $f:M\times M\to \mathbb{R}$.
Theorem \ref{thm: main theorem} shows a regular behavior of 
$\mathrm{Cr}(f_n)$ for ``many'' (i.e. dense) $f$.
Next we will shows that there exists a very wild phenomenon.
We concentrate on the case $M=S^1 := \RR/2\pi\ZZ$.
\begin{theorem}
\label{thm:growth}
There exists a non-empty open subset $\cU$ of $C^\infty(S^1 \times S^1)$
 such that
 the set
\begin{equation*}
\left\{f \in \cU \;\left|\;
 \limsup_{n \ra \infty} \frac{\#\Cr(f_n)}{a_n} \geq 1
 \right. \right\}
\end{equation*}
 is residual in $\cU$
 for any given sequence $(a_n)_{n \geq 1}$ of positive integers.
\end{theorem}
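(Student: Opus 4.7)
The plan is to translate the enumeration of $\Cr(f_n)$ into a Lagrangian intersection problem for a cylinder map, and then apply a Kaloshin-style fast-growth construction near a homoclinic tangency. First I would restrict to the $C^\infty$-open subset of \emph{twist} generating functions $f$ with $\partial_x\partial_y f$ nowhere zero on $S^1\times S^1$. Each such $f$ determines a symplectic diffeomorphism $F_f$ of the cylinder $S^1\times\RR$ via the generating-function rule $y=-\partial_1 f(x,X)$, $Y=\partial_2 f(x,X)$, and a direct computation identifies the critical point equations of $f_n$ with the $n$-step orbit equations of $F_f$ together with the Neumann boundary conditions $\partial_1 f(p_1,p_2)=0$ and $\partial_2 f(p_n,p_{n+1})=0$. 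This yields a bijection
\[
\Cr(f_n)\;\longleftrightarrow\;F_f^{\,n}(Z_0)\cap Z_0,\qquad Z_0:=S^1\times\{0\},
\]
under which transverse intersections correspond exactly to Morse critical points of $f_n$.

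Next I would choose a base twist function $f_0$ whose induced map $F_0:=F_{f_0}$ possesses a hyperbolic saddle $p_*\in Z_0$ whose invariant manifolds $W^s(p_*)$, $W^u(p_*)$ share a quadratic homoclinic tangency at some point $q_0$, and let $\mathcal{U}$ be a sufficiently small $C^\infty$-open neighborhood of $f_0$ inside the twist set. Hyperbolicity of the continuation $p_*(f)$ and the qualitative geometry of $W^s(p_*(f))$, $W^u(p_*(f))$ persist throughout $\mathcal{U}$. By the $\lambda$-lemma, $F_f^{\,n}(Z_0)$ $C^1$-approximates $W^u(p_*(f))$ on compact pieces as $n\to\infty$, so the near-tangencies between $W^u$ and $W^s$ in the homoclinic region translate into near-tangencies between $F_f^{\,n}(Z_0)$ and $Z_0$ for all sufficiently large $n$.

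For each pair $k,j\geq 1$ of positive integers, define
\[
\mathcal{O}_{k,j}\;:=\;\{\,f\in\mathcal{U}\,:\,\exists\,n\geq k\text{ with }f_n\text{ Morse and }\#\Cr(f_n)\geq(1-1/j)\,a_n\,\}.
\]
Openness of $\mathcal{O}_{k,j}$ is immediate since transverse Lagrangian intersections are locally stable under $C^\infty$-small perturbations. The substantive step — and the main obstacle — is to show that each $\mathcal{O}_{k,j}$ is dense in $\mathcal{U}$. I would establish this by mimicking Kaloshin's fast-growth mechanism: given $f\in\mathcal{U}$, choose $n\geq k$ large enough that $F_f^{\,n}(Z_0)$ follows $W^u(p_*(f))$ for a long time and hence makes many near passes of $q_0$, then apply a $C^\infty$-small bump perturbation of $f$ supported in a small neighborhood of the tangency region; a quantitative version of Newhouse's homoclinic bifurcation analysis shows that this can be arranged to produce at least $a_n$ transverse intersections of $F_{\tilde f}^{\,n}(Z_0)$ with $Z_0$, hence at least $a_n$ Morse critical points of $\tilde f_n$. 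Since $\mathcal{U}$ is a Baire space, $\bigcap_{k,j\geq 1}\mathcal{O}_{k,j}$ is residual in $\mathcal{U}$, and every $f$ in it satisfies $\limsup_{n\to\infty}\#\Cr(f_n)/a_n\geq 1$, as required. The hard part is therefore the density step: one must quantitatively control how a small local perturbation of $f$ propagates through many iterates of $F_f$ into many Morse intersections of $F_{\tilde f}^{\,n}(Z_0)$ with $Z_0$ — this is the generating-function counterpart of Kaloshin's construction of arbitrarily fast growth of periodic orbits for surface diffeomorphisms.
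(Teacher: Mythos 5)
Your overall architecture is essentially the paper's: pass from $f$ to the twist map $F_f$ by the generating-function correspondence, identify $\Cr(f_n)$ with the boundary-value orbit set $F_f^{\,n}(Z_0)\cap Z_0$, start from a diffeomorphism with a homoclinic tangency, use the $\lambda$-lemma to bring long iterates of $Z_0$ into the tangency region, and finish with a Baire argument over the open sets $\mathcal{O}_{k,j}$. The reduction of the theorem to the openness-plus-density of these sets is sound (openness is indeed immediate from stability of transverse intersections, and the $\bigcap_{k,j}$ gives the $\limsup$ inequality).

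The genuine gap is the density step, and the phrase ``a quantitative version of Newhouse's homoclinic bifurcation analysis'' near a \emph{quadratic} tangency does not close it. Unfolding a quadratic tangency by a $C^\infty$-small bump produces only a bounded number of new transverse crossings, nowhere near the $a_n$ you need; worse, you need the mechanism to be available not just near $f_0$ but near an arbitrary $f\in\mathcal{U}$ to get density. The paper resolves both points with two named theorems your proposal omits: Gonchenko--Turaev--Shilnikov, which lets one perturb to an \emph{$\infty$-flat} homoclinic tangency (and only then does a further arbitrarily small local perturbation create \emph{infinitely} many transverse intersections of the pushed-forward interval with $Z_0$), and Duarte's theorem in the conservative setting, which produces an open set $\cU_*$ near $f_0$ on which the continuation $\hat p(f)$ exhibits homoclinic tangency for a \emph{dense} set of $f$. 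Without something playing the role of Duarte's theorem you only show the $\mathcal{O}_{k,j}$ are nonempty near $f_0$, not dense in $\mathcal{U}$; without the $\infty$-flat step you cannot manufacture $a_n$ intersections for arbitrary $a_n$. There is a further omitted issue of confinement: the generating-function identification is on the universal cover, and to turn intersections of $F_f^n(Z_0)$ with $Z_0$ into critical points of $f_n$ on $(S^1)^{n+1}$ one must keep the relevant orbit segments inside a fixed fundamental domain. The paper builds a concrete $F_0$ and invokes the KAM theorem to obtain a persistent invariant circle in an annulus $A(\delta)$ that traps the dynamics in a disk $U_1\subset(-\pi,\pi)^2$; this is what justifies bounding $\#[\Cr(H_n)\cap I^{n+1}]$ below by $\#\Lambda^n(F,U_1)$. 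Your proposal does not address this, and on the open cylinder $S^1\times\RR$ (rather than $\TT^2$) the confinement is not automatic. Finally, the extra clause ``$f_n$ Morse'' in your $\mathcal{O}_{k,j}$ is superfluous (the theorem does not require it), as is the factor $(1-1/j)$ since the $a_n$ are integers.
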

For example, this implies that the set 
\[ \left\{f \in \cU \;\left|\;
 \limsup_{n \ra \infty} \frac{\#\Cr(f_n)}{\exp(\exp(n))} \geq 1
 \right. \right\} \]
is residual in the above given open set $\mathcal{U}$.
This shows a drastically unstable behavior of $\Cr(f_n)$ over $f\in \mathcal{U}$.
(Note that $\#\Cr(f_n)$ has an exponential growth on a dense subset of $\mathcal{U}$.)

It is interesting to see that there also exists a ``stable region'':
\begin{theorem} \label{thm: stable region}
There exist $d>0$ and a non-empty open set $\mathcal{V} \subset \mathcal{C}^\infty(S^1\times S^1)$
such that every $f\in \mathcal{V}$ satisfies the following conditions:

\noindent 
(i) All $f_n: (S^1)^{n+1}\to \mathbb{R}$ $(n\geq 1)$ are Morse functions.

\noindent 
(ii) For all $n\geq 1$, $\#\mathrm{Cr}(f_n) = d^{n+1}$.
\end{theorem}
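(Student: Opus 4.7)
My plan is to take $\cV$ as a small $C^2$-neighborhood of the decoupled model
\[
 f_0(x,y) = \cos x + \cos y.
\]
Writing $g=\cos$, the identity
\[
 (f_0)_n(p_1,\dots,p_{n+1}) = g(p_1) + g(p_{n+1}) + 2\sum_{i=2}^{n} g(p_i)
\]
shows that the variables decouple: $\Crit((f_0)_n) = \{0,\pi\}^{n+1}$ has exactly $2^{n+1}$ elements, and the Hessian of $(f_0)_n$ at each of these is diagonal with entries in $\{\pm 1,\pm 2\}$. So the target value is $d=2$, and the whole problem is to show that these $2^{n+1}$ critical points, and no others, survive every sufficiently small $C^2$-perturbation of $f_0$, \emph{uniformly in} $n$.

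I would set $\cV = \{f : \|f-f_0\|_{C^2} < \epsilon\}$ for a small $\epsilon$ to be chosen, and proceed in two steps. \emph{Localization.} The Euler--Lagrange equations of $f_n$, namely $\partial_1 f(p_1,p_2)=0$, $\partial_2 f(p_{i-1},p_i)+\partial_1 f(p_i,p_{i+1})=0$, $\partial_2 f(p_n,p_{n+1})=0$, differ from those of $f_0$ (which reduce to $g'(p_i)=0$ for every $i$) by at most $2\epsilon$ at each point. Hence at any critical point of $f_n$ one has $|g'(p_i)|\le \epsilon$ for every $i$. Since $g$ is Morse, this confines each $p_i$ to a short arc $U_{c_i}$ around a critical point $c_i\in\{0,\pi\}$, so every critical point of $f_n$ lies in exactly one of the $2^{n+1}$ product cells $U_{c_1}\times\dots\times U_{c_{n+1}}$. \emph{Uniqueness and nondegeneracy in each cell.} The Hessian $H$ of $f_n$ at any point is tridiagonal: its only nonzero off-diagonal entry is $\partial_1\partial_2 f(p_i,p_{i+1})$ and the diagonal entry is $\partial_2^2 f(p_{i-1},p_i)+\partial_1^2 f(p_i,p_{i+1})$ (with the obvious modification at the endpoints). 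Because $\partial_1\partial_2 f_0\equiv 0$ and the critical points of $g$ have $|g''|=1$, throughout every cell the off-diagonal entries of $H$ are bounded by $\epsilon$ while the diagonal entries stay within $O(\epsilon)$ of $\pm 1$ (boundary) or $\pm 2$ (interior). For $\epsilon$ small, $H$ is strictly diagonally dominant, so it is invertible and $\|H^{-1}\|_{\ell^\infty\to\ell^\infty}\le C$ for a constant $C$ independent of $n$ and of the cell. A Newton contraction $T(p)=p - H_0^{-1}\nabla f_n(p)$, where $H_0$ is the diagonal Hessian of $(f_0)_n$ at the vertex of the cell, then produces the unique critical point in the cell, which is automatically nondegenerate because $H$ is invertible there.

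The key obstacle is precisely this uniformity in $n$: since $\dim(S^1)^{n+1}$ tends to infinity, a naive application of the implicit function theorem to each $(f_0)_n$ could impose a dimension-dependent bound on the allowed size of $f-f_0$. The structural feature that sidesteps this is the nearest-neighbor coupling in the definition of $f_n$: it makes the Hessian tridiagonal, and diagonal dominance of a tridiagonal matrix is a pointwise condition on its entries, so a single $C^2$-closeness threshold $\epsilon$ controls $\|H^{-1}\|$ for \emph{every} $n$. The rest is standard perturbation theory.
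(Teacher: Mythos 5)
Your proof is correct and reaches the same conclusion, but by a genuinely different route than the paper. Both start from the decoupled model $g(x,y)=h(x)+h(y)$ (you specialize to $h=\cos$, so $d=2$, whereas the paper keeps a general Morse $h$ on $S^1$, so $d=\#\Crit(h)$ is any even integer $\geq 2$). The paper then introduces the scalar quantity $M(F)(x)=\max_i|\partial F/\partial x_i|+\inf_{\|u\|=1}\|\hess_xF(u)\|$, proves the uniform lower bound $M(g_n)\geq K:=\inf_x(|h'(x)|+|h''(x)|)>0$ using the fact that $\hess g_n$ is diagonal, shows via the nearest-neighbor estimate $\|\hess(f_n-g_n)u\|\leq 5\|\nabla^2(f-g)\|_\infty\|u\|$ that $M(f_n)>0$ throughout the ball $\mathcal{V}=\{2\|\nabla(f-g)\|_\infty+5\|\nabla^2(f-g)\|_\infty<K\}$, and finally obtains the exact count by a homotopy: the convexity of $\mathcal{V}$ means $tg+(1-t)f\in\mathcal{V}$, so $(tg+(1-t)f)_n$ is Morse for all $t$ and $\#\Crit(f_n)=\#\Crit(g_n)$. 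Your argument replaces the last two steps by a localization (the Euler--Lagrange equations force each $p_i$ to within $O(\epsilon)$ of a critical point of $h$) plus a uniform Newton/IFT argument driven by strict diagonal dominance of the tridiagonal Hessian, which yields $\|H^{-1}\|_{\ell^\infty\to\ell^\infty}\leq C$ independently of $n$. The structural input is the same in both cases --- the nearest-neighbor coupling keeps the Hessian tridiagonal, so a single $C^2$-smallness threshold works for every $n$ --- but your version is more constructive, actually locating the $2^{n+1}$ critical points one per cell, while the paper's $M(F)$-plus-homotopy argument is shorter and avoids the localization step; your version also avoids invoking persistence/homotopy, which the paper leaves as a one-line appeal to standard facts.
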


The following question seems interesting.
\begin{problem}
Find a characterization of a function $f:M\times M\to \mathbb{R}$ which admits a neighborhood $\mathcal{W}
\subset C^\infty(M\times M)$ such that for every $g\in \mathcal{W}$ all functions 
$g_n:M^{n+1}\to \mathbb{R}$ $(n\geq 1)$ are Morse functions.
\end{problem}

\textbf{Acknowledgement.}
The authors wish to thank Professor Masahiro Shiota.
He kindly explained the proof of Proposition \ref{prop: algebraic model of our manifold, preliminary}
to the authors.
M. Asaoka, T. Fukaya and M. Tsukamoto were supported by Grant-in-Aid for Young Scientists (A) 
(22684003), (B) (23740049) and (B) (21740048)
respectively from JSPS.
K. Mitsui was supported by  Grant-in-Aid for JSPS Fellows (21-1111) from JSPS.

\section{Proof of Theorem \ref{thm: main theorem}} \label{section: proof of Theorem}
The essential ingredient of the proof of Theorem \ref{thm: main theorem}
is algebraic geometry.
As far as the authors know, the idea to use Nash's theorem and 
algebraic geometry in the study of $C^\infty$ manifolds 
goes back to Artin-Mazur \cite{Artin-Mazur}.
(For recent results on the Artin-Mazur type problem, see Kaloshin \cite{Kaloshin 1, Kaloshin 2}.)

\subsection{Preliminary}
Let $M$ be a compact connected $C^\infty$ manifold without boundary.
The next proposition will be proved later (Section \ref{section:  Proof of Proposition (algebraic model)}).
\begin{proposition}  \label{prop: algebraic model of our manifold}
There exist homogeneous polynomials (of real coefficients)
$F_i(X_0,X_1, \dots, X_d)$ $(1\leq i\leq R)$ in $\mathbb{R}[X_0,X_1,\dots,X_d]$ satisfying the following 
three conditions.

\noindent 
(i) The scheme $\mathrm{Proj}\left(\affine[X_0,X_1,\dots,X_d]/(F_1, \dots,F_R)\right)$ is an 
equidimensional regular scheme. 
(Set $\dim \mathrm{Proj}\left(\affine[X_0,X_1,\dots,X_d]/(F_1, \dots,F_R)\right) = d-r$.)
This implies the following:

Set $X :=\{[X_0:\cdots:X_d]\in \mathbb{P}^d(\affine)|\, F_i(X_0, \dots, X_d) =0 \> (1\leq i\leq R)\}$.
Then $X$ is a complex submanifold of $\mathbb{P}^d(\affine)$, and the (complex) dimension of every connected component of 
$X$ is equal to $d-r$. Moreover
\begin{equation*}
 \mathrm{rank}
 \begin{bmatrix} 
  \frac{\partial F_1}{\partial X_0} & \frac{\partial F_1}{\partial X_1} &\cdots & \frac{\partial F_1}{\partial X_d} \\
  \frac{\partial F_2}{\partial X_0} & \frac{\partial F_2}{\partial X_1} &\cdots & \frac{\partial F_2}{\partial X_d} \\
  \vdots & \vdots & \ddots & \vdots \\
  \frac{\partial F_R}{\partial X_0} & \frac{\partial F_R}{\partial X_1} &\cdots & \frac{\partial F_R}{\partial X_d} 
 \end{bmatrix}
 = r \quad \text{on $X$}.
\end{equation*}

\noindent 
(ii) $X$ transversally intersects with the hyperplane $\{X_0=0\}$ in $\mathbb{P}^d(\affine)$.
This implies the following:
Set $X_\infty := X\cap \{X_0=0\}$. Then
\begin{equation*}
\mathrm{rank}
 \begin{bmatrix}
 1 & 0 & \cdots & 0 \\
 \frac{\partial F_1}{\partial X_0} & \frac{\partial F_1}{\partial X_1} &\cdots & \frac{\partial F_1}{\partial X_d} \\
  \frac{\partial F_2}{\partial X_0} & \frac{\partial F_2}{\partial X_1} &\cdots & \frac{\partial F_2}{\partial X_d} \\
  \vdots & \vdots & \ddots & \vdots \\
  \frac{\partial F_R}{\partial X_0} & \frac{\partial F_R}{\partial X_1} &\cdots & \frac{\partial F_R}{\partial X_d} 
 \end{bmatrix}
 = 1 + \mathrm{rank}
 \begin{bmatrix}
  \frac{\partial F_1}{\partial X_1} &\cdots & \frac{\partial F_1}{\partial X_d} \\
  \frac{\partial F_2}{\partial X_1} &\cdots & \frac{\partial F_2}{\partial X_d} \\
  \vdots & \ddots & \vdots \\
  \frac{\partial F_R}{\partial X_1} &\cdots & \frac{\partial F_R}{\partial X_d} 
 \end{bmatrix}
 = 1 + r \quad \text{on $X_\infty$}.
\end{equation*}

\noindent 
(iii) Let $\mathbb{R}^d := \{[1:x_1:x_2:\cdots:x_d]\in \mathbb{P}^d(\affine)|\, x_1, \dots,x_d\in \mathbb{R}\}$.
Then $\underline{X}^\mathbb{R} := X\cap \mathbb{R}^d$ is diffeomorphic to $M$.
Here $\underline{X}^{\mathbb{R}}$ becomes a $C^\infty$ submanifold of $\mathbb{R}^d$ by the above condition (i).
(We have $\dim_{\mathbb{R}} \underline{X}^{\mathbb{R}} = d-r$.)
We fix a diffeomorphism between $M$ and $\underline{X}^{\mathbb{R}}$ and identify them.
\end{proposition}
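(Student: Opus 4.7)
The plan is to combine three classical tools: the Nash--Tognoli--King theorem, Hironaka's resolution of singularities, and a real Bertini-type argument for the hyperplane at infinity. The overall strategy is to first produce a possibly singular real algebraic projective model of $M$, then desingularize over $\mathbb{R}$, and finally choose a projective embedding so that the hyperplane $\{X_0=0\}$ meets $X$ transversely and the real affine locus of $X$ is exactly a copy of $M$.

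By the Nash--Tognoli--King theorem, there exist real polynomials $G_1,\dots,G_s\in\mathbb{R}[y_1,\dots,y_N]$ whose real zero locus in $\mathbb{R}^N$ is a nonsingular real algebraic subset diffeomorphic to $M$. Let $\bar V\subset\mathbb{P}^N(\mathbb{C})$ denote the projective closure of the complex affine scheme they define. The real part identified with $M$ lies in the regular locus of $\bar V$, but $\bar V$ itself may carry complex singularities at non-real points or at infinity. Applying a canonical/functorial version of Hironaka's resolution of singularities (e.g.\ Bierstone--Milman or Villamayor), which commutes with complex conjugation, one obtains a smooth projective variety $\tilde V$ defined over $\mathbb{R}$ together with a birational $\mathbb{R}$-morphism $\pi\colon \tilde V\to \bar V$ that is an isomorphism over the smooth locus of $\bar V$, in particular over $M$. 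Thus $\pi^{-1}(M)\subset \tilde V(\mathbb{R})$ is a compact real submanifold diffeomorphic to $M$.

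Next, pick a very ample line bundle $\mathcal{L}$ on $\tilde V$ defined over $\mathbb{R}$ --- for instance a sufficiently high tensor power of a twist of $\pi^{*}\mathcal{O}_{\mathbb{P}^{N}}(1)$. Its real global sections yield a closed embedding $\iota\colon \tilde V\hookrightarrow \mathbb{P}^d(\mathbb{C})$ defined over $\mathbb{R}$, and the homogeneous ideal of $\iota(\tilde V)$ is generated by real homogeneous polynomials $F_1,\dots,F_R$ satisfying condition (i). To achieve (ii) and (iii), compose $\iota$ with a generic real projective-linear automorphism of $\mathbb{P}^d$. Since $\iota(\pi^{-1}(M))$ is compact, a generic real hyperplane both misses this compact set and cuts $\iota(\tilde V)$ transversely (the latter by a real Bertini-type theorem). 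Taking such a hyperplane as the new $\{X_0=0\}$ ensures (ii) and places $\pi^{-1}(M)$ in the affine chart $\mathbb{R}^d$; any extra real components of $\tilde V(\mathbb{R})$ can be absorbed into the hyperplane at infinity by a further generic coordinate change, so that $X\cap\mathbb{R}^d$ equals the image of $\pi^{-1}(M)$ and is diffeomorphic to $M$.

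The main obstacle is the compatibility of the resolution step with the real structure, together with the coordination of the genericity arguments at the end. The former is handled by the canonicity of modern resolution algorithms: they are functorial with respect to smooth morphisms and hence $\mathrm{Gal}(\mathbb{C}/\mathbb{R})$-equivariant, so they descend to a resolution defined over $\mathbb{R}$ without altering the diffeomorphism type of the real smooth locus. The latter requires checking that the set of ``bad'' real hyperplanes --- those failing transversality with $\tilde V$, meeting $\pi^{-1}(M)$, or containing extra real components we wish to exclude --- is a proper closed subset of the real projective dual space, so that a generic choice simultaneously satisfies all constraints. Compactness of $M$ is essential here: it guarantees that the real locus of interest can always be pushed into the affine chart $\{X_0\neq 0\}$.
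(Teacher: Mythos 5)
Your overall strategy (Nash--Tognoli--King, then Hironaka, then a Bertini-type choice of hyperplane at infinity) is the same as the paper's, but there is a genuine gap in the middle step that the paper is careful to avoid.

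The problem is the sentence claiming that ``any extra real components of $\tilde V(\mathbb{R})$ can be absorbed into the hyperplane at infinity by a further generic coordinate change.'' You take the projective closure $\bar V$ of the complex affine zero locus of $G_1,\dots,G_s$; even though its real affine points are exactly $M$ (compact), $\bar V$ can acquire real points on the hyperplane at infinity, and some of these may be singular. After resolving, $\tilde V(\mathbb{R})$ can therefore contain connected components disjoint from $\pi^{-1}(M)$. Since $\tilde V$ is smooth, every component of $\tilde V(\mathbb{R})$ has real dimension equal to $\dim_{\mathbb{C}}\tilde V$; but if $\tilde V$ is irreducible over $\mathbb{C}$, then any hyperplane $H$ defined over $\mathbb{R}$ with $\tilde V\not\subset H$ meets $\tilde V$ in a subvariety of strictly smaller complex dimension, hence cannot contain a whole top-dimensional component of $\tilde V(\mathbb{R})$. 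Moreover such containment is incompatible with the transversality you also need from the same hyperplane. So there is no ``generic coordinate change'' that removes the extra components; the argument breaks at exactly the point where you need $X\cap\mathbb{R}^d$ to be \emph{equal} to (not merely contain) something diffeomorphic to $M$.

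The paper avoids this by working projectively from the start: it applies Nash--Tognoli--King in $\mathbb{P}^p(\mathbb{R})$ to get a nonsingular $V\cong M$ that is Zariski closed and irreducible, then takes $I$ to be the full homogeneous prime ideal of $V$, so that $X:=\mathrm{Proj}(\mathbb{R}[X_0,\dots,X_p]/I)$ has $X(\mathbb{R})=V$ exactly. Since $V$ is nonsingular, every real point of $X$ is a regular point, so the center of Hironaka's monoidal transformation is disjoint from $X(\mathbb{R})$, and the induced map $\tilde X(\mathbb{R})\to X(\mathbb{R})$ is a diffeomorphism; thus no extra real components ever appear. The paper then re-embeds via a modified $2$-uple Veronese-type map sending $X_{00}\mapsto\sum X_i^2$, which places \emph{all} real points in the affine chart by a pointwise positivity argument, so the only role of the Bertini step is transversality (plus a compactness argument to avoid the compact real locus). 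Two minor comments: your appeal to canonical (Bierstone--Milman/Villamayor) resolution to get Galois-equivariance is unnecessary overkill, because Hironaka's theorem already applies to integral schemes over the field $\mathbb{R}$ of characteristic zero, which is exactly how the paper invokes it; and the paper's Lemma~\ref{lemma: Zariski open set is open dense} is what makes the ``real Bertini-type theorem'' you gesture at precise, so it is worth stating and using explicitly.
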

Let $\affine^d := \{[1:x_1:\cdots:x_d]\in \mathbb{P}^d(\affine)|\, x_1, \dots,x_d\in \affine\}$, and set 
$\underline{X} := X\cap \affine^d$. $\underline{X}$ is a complex submanifold of $\affine^d$ 
by the condition (i) in Proposition \ref{prop: algebraic model of our manifold}.
\begin{example}
If $M=S^1$, then $R=1$ and $F_1(X_0,X_1,X_2) = -X_0^2+X_1^2+X_2^2$ satisfy the conditions of Proposition 
\ref{prop: algebraic model of our manifold}.
In this case, we have $d=2$ and $r=1$.
\end{example}
\begin{lemma}  \label{lemma: existence of good function, first step}
For any positive integer $N$, there is a homogeneous polynomial 
$\Psi(X_0, X_1, \dots, X_d)\in \affine[X_0,X_1,\dots,X_d]$ of degree $N$ satisfying the 
following two conditions.

\noindent 
(i) 
\begin{equation*}
  \mathrm{rank}
  \begin{bmatrix}
  \frac{\partial\Psi}{\partial X_1} & \cdots & \frac{\partial \Psi}{\partial X_d} \\
  \frac{\partial F_1}{\partial X_1} & \cdots & \frac{\partial F_1}{\partial X_d} \\
  \vdots & \ddots & \vdots \\
  \frac{\partial F_R}{\partial X_1} & \cdots & \frac{\partial F_R}{\partial X_d}
  \end{bmatrix}
  = r + 1 \quad \text{on $X_\infty = X\cap \{X_0=0\}$}.
\end{equation*}

\noindent 
(ii) 
The holomorphic function $\psi:\underline{X}\to \affine$, $[1:x_1:\cdots:x_d]\mapsto \Psi(1,x_1,\dots,x_d)$, is a 
Morse function, i.e., the Hessians of $\psi$ at the critical points are regular.
\end{lemma}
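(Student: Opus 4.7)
The plan is a Bertini-type genericity argument in the finite-dimensional complex vector space $V_N \subset \affine[X_0, \ldots, X_d]$ of homogeneous polynomials of degree $N$. I would show that each of conditions (i) and (ii) defines a Zariski-open subset of $V_N$ whose complement is a proper Zariski-closed subvariety, via a dimension count on a suitable incidence variety; any $\Psi$ in the intersection of the two open sets then satisfies the lemma.

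For condition (i), form the incidence variety
\[
Z_1 := \{(\Psi, p) \in V_N \times X_\infty : \text{the matrix in (i) has rank } \leq r \text{ at } p\}.
\]
By Proposition \ref{prop: algebraic model of our manifold}(ii), the bottom $R$ rows of this matrix already have rank $r$ on $X_\infty$, so at a fixed $p$ the condition on $\Psi$ is that $(\partial_1 \Psi(p), \ldots, \partial_d \Psi(p))$ lie in a fixed $r$-dimensional subspace of $\affine^d$---a codimension-$(d - r)$ linear condition. Working in an affine chart around $p$ (say $X_i \neq 0$ for some $i \geq 1$) via the correspondence $\Psi \leftrightarrow X_i^N \tilde{\Psi}(X_0/X_i, \ldots, \widehat{X_i/X_i}, \ldots, X_d/X_i)$ between $V_N$ and polynomials of degree $\leq N$ in $d$ affine variables, one sees that the evaluation $\Psi \mapsto (\partial_j \Psi(p))_{j=1}^d$ is surjective onto $\affine^d$ for every $N \geq 1$. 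Hence each fiber of $Z_1 \to X_\infty$ has codimension $d - r$ in $V_N$, and since $\dim_{\affine} X_\infty = d - r - 1$, the projection of $Z_1$ to $V_N$ is a proper closed subvariety.

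For condition (ii), consider the analogous incidence variety
\[
Z_2 := \{(\Psi, p) \in V_N \times \underline{X} : p \in \Crit(\psi),\ \hess_p \psi \text{ is degenerate on } T_p \underline{X}\},
\]
and run a parallel dimension count using surjectivity of the $2$-jet evaluation $V_N \to J^2_p \underline{X}$. For $N \geq 2$ this surjectivity is elementary from the same affine-chart correspondence (use monomials of the form $X_0^{N-2}Q$ for quadratic $Q$); for $N = 1$, the polynomial $\Psi$ is linear and one invokes the classical projective-duality fact that a generic linear function on a smooth quasi-projective variety is Morse, via Sard applied to the Gauss / conormal map of $\underline{X}$. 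The projection of $Z_2$ to $V_N$ is therefore also a proper closed subvariety, and intersecting the two Zariski-open complements yields the desired $\Psi$. The main obstacle is verifying the $2$-jet surjectivity uniformly in $N$ together with handling the possible accumulation of critical points of $\psi$ on the non-compact variety $\underline{X}$ as one approaches $X_\infty$; condition (i) is precisely designed to rule out the latter phenomenon (no critical point can escape to infinity once the rank condition holds there), and the former is a standard transversality computation.
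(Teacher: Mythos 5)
Your proposal is correct in spirit and arrives at the lemma by a valid route, but it differs from the paper's argument in its overall structure. The paper does \emph{not} treat both conditions as generic open conditions in the full parameter space $V_N$ and intersect; instead it proceeds in two steps. First it chooses $\Psi_0 \in \affine[X_1,\dots,X_d]$ (a homogeneous degree-$N$ polynomial \emph{not} involving $X_0$) so that the hypersurface $\{\Psi_0=0\}$ meets $X_\infty$ transversally (Bertini via the $N$-fold Segre embedding); a short Euler-relation argument shows this gives condition (i) at every point of $X_\infty$, including points where $\Psi_0 \neq 0$. It then perturbs by $\Psi_{\mathbf{a}} = \Psi_0 + X_0^{N-1}(a_1 X_1 + \cdots + a_d X_d)$, which on the affine chart is just $\psi_0 + \langle \mathbf{a}, x\rangle$, and applies Sard to the projection of the incidence manifold $\mathcal{X}=\{(p,\mathbf{a}) : d(\psi_{\mathbf{a}}|_{\underline X})_p=0\}\subset \underline X\times\affine^d$ onto $\affine^d$ to get the Morse condition for small $\mathbf{a}$. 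Since condition (i) is open, small $\mathbf{a}$ preserves it. The advantage of this specific $d$-parameter family of linear perturbations is that it is uniform in $N$, so there is no need for the separate $N=1$ duality argument your approach requires (note that the paper's Sard step \emph{is} essentially the Gauss-map duality you invoke for $N=1$, applied to all $N$). Two minor inaccuracies in your sketch: (a) since $\underline X$ is not complete, the projection of your $Z_2$ to $V_N$ need not be Zariski-closed --- what you get is a constructible set of dimension $\leq \dim V_N - 1$, and you should say ``contained in a proper closed subvariety,'' which is still enough to conclude; and (b) the accumulation-at-infinity concern you raise in the last sentence is not actually relevant to Lemma~\ref{lemma: existence of good function, first step} itself --- being Morse is a pointwise non-degeneracy statement and is unaffected by whether critical points accumulate at $X_\infty$; condition (i) is used to control critical points at infinity only later, in the proof of Lemma~\ref{lemma: existence of good function, iterated} and the B\'ezout count.
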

\begin{proof}
From the conditions (i) and (ii) in Proposition \ref{prop: algebraic model of our manifold}, 
$X_\infty$ is a complex submanifold of $\{X_0=0\} = \mathbb{P}^{d-1}(\affine)$ of codimension $r$.
For any $N\geq 1$, we can choose a homogeneous polynomial $\Psi_0(X_1,\dots,X_d) \in \affine[X_1,\dots,X_d]$
of degree $N$ such that the hypersurface $\{\Psi_0 =0\}$ is non-singular ($\mathrm{grad}\Psi_0 \neq 0$ on 
$\{\Psi_0=0\}$) and
transversally intersects with $X_\infty$ in $\mathbb{P}^{d-1}(\affine)$
(the $N$-times Segre embeddings and Bertini's theorem). This implies
\begin{equation*}
  \rank  \begin{bmatrix}
         \frac{\partial\Psi_0}{\partial X_1} & \cdots & \frac{\partial \Psi_0}{\partial X_d} \\
         \frac{\partial F_1}{\partial X_1} & \cdots & \frac{\partial F_1}{\partial X_d} \\
         \vdots & \ddots & \vdots \\
         \frac{\partial F_R}{\partial X_1} & \cdots & \frac{\partial F_R}{\partial X_d}
         \end{bmatrix}
          = r + 1 \quad \text{on $X_\infty$}.
\end{equation*}
For $\mathbf{a} = (a_1, \dots,a_d)\in \affine^d$, we set 
$\Psi_{\mathbf{a}}(X_0,X_1,\dots,X_d) := \Psi_0(X_1\dots,X_d) + X_0^{N-1}(a_1X_1+\cdots+a_dX_d)$.
The above condition (i) (of this lemma) is an open condition.
Hence if we choose $\mathbf{a}\in \affine^d$ sufficiently small in the Euclidean norm, 
then $\Psi_{\mathbf{a}}$ also satisfies the condition (i).
Set $\psi_{\mathbf{a}}([1:x_1: \cdots:x_d]) := \Psi_{\mathbf{a}}(1,x_1,\dots,x_d) 
   = \Psi_0(1,x_1,\dots,x_d) + a_1 x_1 + \cdots + a_d x_d$.
Let $\mathcal{X} \subset \underline{X}\times \affine^d$ be the set of points 
$([1:x_1:\cdots:x_d], \mathbf{a})\in \underline{X}\times \affine^d$ such that
$d(\psi_{\mathbf{a}}|_{\underline{X}})=0$ at $[1:x_1:\cdots:x_d]$.
$\mathcal{X}$ is a complex submanifold of $\underline{X}\times \affine^d$.
By Sard's theorem, we can choose a sufficiently small $\mathbf{a}\in \affine^d$ such that 
$\mathbf{a}$ is a regular value of the projection $\mathcal{X}\to \affine^d$.
Thus we can choose a sufficiently small $\mathbf{a}\in \mathbb{C}^d$ such that 
$\Psi_{\mathbf{a}}$ satisfies the conditions (i) and (ii) of this lemma.
\end{proof}

\subsection{Proof of Theorem \ref{thm: main theorem}}
Let $N$ be a positive integer, and let $V_N\subset \affine[x_1, \dots, x_d, y_1, \dots, y_d]$ be the 
set of polynomials $\varphi(x_1, \dots, x_d, y_1, \cdots, y_d)$ satisfying $\deg \varphi\leq N$.
Take $\varphi\in V_N$. 
We define a homogeneous polynomial $\Phi(Z, X_1,\dots,X_d,Y_1,\dots,Y_d)\in \affine[Z,X_1,\dots,X_d,Y_1,\dots,Y_d]$ by setting 
\begin{equation} \label{eq: definition of Phi from varphi}
   \Phi(Z, X_1,\dots,X_d,Y_1,\dots,Y_d) := 
   Z^N\varphi\left(\frac{X_1}{Z},\dots,\frac{X_d}{Z},\frac{Y_1}{Z},\dots,\frac{Y_d}{Z}\right).
\end{equation}
For positive integers $n$, we set 
\begin{equation} \label{eq: definition of varphi_n and Phi_n}
   \varphi_n(\mathbf{x}_1, \mathbf{x}_2,\dots,\mathbf{x}_{n+1}) := \sum_{k=1}^n \varphi(\mathbf{x}_k,\mathbf{x}_{k+1}),
   \quad \Phi_n(Z, \mathbf{X}_1,\mathbf{X}_2, \dots, \mathbf{X}_{n+1}) := 
   \sum_{k=1}^n \Phi(Z,\mathbf{X}_k,\mathbf{X}_{k+1})
\end{equation}
where $\mathbf{x}_k = (x_{k1},x_{k2},\dots,x_{kd})$ and $\mathbf{X}_k = (X_{k1},X_{k2},\dots,X_{kd})$. 

We define $\rho_{n,k}(\varphi)(Z, \mathbf{X}_1, \dots,\mathbf{X}_{n+1})$ $(1\leq k\leq n+1)$ as the rank of the following matrix:
\begin{equation} \label{eq: definition of rho_n(varphi)}
   \begin{bmatrix}
   \frac{\partial \Phi_n}{\partial X_{k1}}(Z, \mathbf{X}_1,\dots,\mathbf{X}_{n+1}) &
   \frac{\partial \Phi_n}{\partial X_{k2}}(Z, \mathbf{X}_1,\dots,\mathbf{X}_{n+1})  &\cdots & 
   \frac{\partial \Phi_n}{\partial X_{kd}}(Z, \mathbf{X}_1,\dots,\mathbf{X}_{n+1}) \\
   \frac{\partial F_1}{\partial X_1}(Z, \mathbf{X}_k) & \frac{\partial F_1}{\partial X_2}(Z,\mathbf{X}_k)
    & \cdots & \frac{\partial F_1}{\partial X_d}(Z, \mathbf{X}_k) \\
   \vdots & \vdots & \ddots & \vdots \\
   \frac{\partial F_R}{\partial X_1}(Z, \mathbf{X}_k) &\frac{\partial F_R}{\partial X_2}(Z, \mathbf{X}_k) 
    & \cdots & \frac{\partial F_R}{\partial X_d}(Z, \mathbf{X}_k)
   \end{bmatrix}.
\end{equation} 
Consider the following condition for 
$[Z:\mathbf{X}_1:\mathbf{X}_2:\cdots:\mathbf{X}_{n+1}]\in \mathbb{P}^{d(n+1)}(\affine)$:
\begin{equation} \label{eq: critical point set, projectivized}
  F_i(Z,\mathbf{X}_k) =0, \> \rho_{n,k}(\varphi)(Z, \mathbf{X}_1,\dots,\mathbf{X}_{n+1})\leq r\quad 
  (1\leq i\leq R, 1\leq k\leq n+1).
\end{equation}
A point $([1:\mathbf{x}_1], [1:\mathbf{x}_2], \dots, [1:\mathbf{x}_{n+1}])\in \underline{X}^{n+1}$
is a critical point of the function 
\begin{equation}  \label{eq: definition of the restriction of varphi_n}
 \varphi_n|_{\underline{X}^{n+1}}: \underline{X}^{n+1}\to \affine, \quad  ([1:\mathbf{x}_1], \dots, [1:\mathbf{x}_{n+1}])
   \mapsto \varphi_n(\mathbf{x}_1,\dots,\mathbf{x}_{n+1})
\end{equation}
if and only if the point $[1:\mathbf{x}_1:\mathbf{x}_2:\cdots:\mathbf{x}_{n+1}] \in \mathbb{P}^{d(n+1)}(\affine)$ 
satisfies the above condition (\ref{eq: critical point set, projectivized}).
\begin{lemma} \label{lemma: existence of good function, iterated}
There exists $\varphi\in V_N$ satisfying the following (i) and (ii):

\noindent 
(i) For any positive integer $n$,
if a point $[Z:\mathbf{X}_1:\mathbf{X}_2:\cdots:\mathbf{X}_{n+1}]\in \mathbb{P}^{d(n+1)}(\affine)$
satisfies (\ref{eq: critical point set, projectivized}), then $Z\neq 0$.

\noindent 
(ii) For all positive integers $n$, the functions $\varphi_n|_{\underline{X}^{n+1}}:\underline{X}^{n+1}\to \affine$
in (\ref{eq: definition of the restriction of varphi_n}) are Morse functions.
\end{lemma}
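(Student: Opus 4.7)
The plan is to choose $\varphi$ in the decoupled form $\varphi(\mathbf{x},\mathbf{y}):=\psi(\mathbf{x})+\psi(\mathbf{y})$, where $\psi$ (and its degree-$N$ homogenization $\Psi$) is supplied by Lemma~\ref{lemma: existence of good function, first step}. The decisive observation is that this additive structure is preserved by the homogenization in~\eqref{eq: definition of Phi from varphi}: one has $\Phi(Z,\mathbf{X},\mathbf{Y})=\Psi(Z,\mathbf{X})+\Psi(Z,\mathbf{Y})$, so the telescoping
\[
 \Phi_n(Z,\mathbf{X}_1,\dots,\mathbf{X}_{n+1}) = \sum_{k=1}^{n+1} c_k\,\Psi(Z,\mathbf{X}_k),\qquad c_1=c_{n+1}=1,\ c_k=2\ (2\leq k\leq n),
\]
holds. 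Consequently the first row of the matrix in~\eqref{eq: definition of rho_n(varphi)} equals the nonzero scalar multiple $c_k$ of $(\partial_{X_\ell}\Psi(Z,\mathbf{X}_k))_\ell$, so the $k$-th block problem reduces to a single-copy problem controlled by Lemma~\ref{lemma: existence of good function, first step}.

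To establish (i), I argue by contradiction. Suppose some projective point $[Z:\mathbf{X}_1:\cdots:\mathbf{X}_{n+1}]$ satisfies~\eqref{eq: critical point set, projectivized} with $Z=0$. Since projective coordinates cannot all vanish, there exists an index $k$ with $\mathbf{X}_k\neq 0$; the equations $F_i(0,\mathbf{X}_k)=0$ then place $[\mathbf{X}_k]$ on $X_\infty\subset\mathbb{P}^{d-1}(\affine)$. Condition~(i) of Lemma~\ref{lemma: existence of good function, first step}, combined with the reduction above, forces the rank of the matrix in~\eqref{eq: definition of rho_n(varphi)} at this $k$ to equal $r+1$, contradicting $\rho_{n,k}(\varphi)\leq r$.

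To establish (ii), the restriction becomes a sum of decoupled one-particle energies: $\varphi_n|_{\underline{X}^{n+1}} = \sum_{k=1}^{n+1} c_k\,(\psi|_{\underline{X}})(\mathbf{x}_k)$. Its critical set is therefore the $(n+1)$-fold Cartesian power of $\Cr(\psi|_{\underline{X}})$, and the Hessian at any critical point is the block-diagonal sum whose $k$-th block equals $c_k\,\hess(\psi|_{\underline{X}})(\mathbf{x}_k^\ast)$; Lemma~\ref{lemma: existence of good function, first step}(ii) makes each block non-degenerate, so every $\varphi_n|_{\underline{X}^{n+1}}$ is Morse. The only delicate point, rather than a true obstacle, is the bookkeeping in step~(i): an individual $\mathbf{X}_k$ may vanish, which yields no useful rank information in that block, but the projective nonvanishing of the full tuple guarantees at least one index $k$ at which $\mathbf{X}_k\in X_\infty$, and a single contradiction suffices.
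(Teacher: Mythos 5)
Your proposal is correct and follows essentially the same route as the paper: choose $\varphi(\mathbf{x},\mathbf{y})=\psi(\mathbf{x})+\psi(\mathbf{y})$ so that $\Phi_n$ decouples into a weighted sum $\sum_k c_k\Psi(Z,\mathbf{X}_k)$, then read off (i) from Lemma~\ref{lemma: existence of good function, first step}(i) and (ii) from the block-diagonal Hessian and Lemma~\ref{lemma: existence of good function, first step}(ii). The paper simply asserts that both conditions ``immediately follow''; you have supplied the two implicit verifications (including the correct handling of indices $k$ with $\mathbf{X}_k=0$), and they check out.
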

\begin{proof}
Let $\Psi(X_0,X_1,\dots,X_d)$ be the homogeneous polynomial of degree $N$ given by Lemma 
\ref{lemma: existence of good function, first step}.
We set $\varphi(\mathbf{x},\mathbf{y}) := \Psi(1,\mathbf{x}) + \Psi(1,\mathbf{y})$
($\mathbf{x} = (x_1, \dots, x_d)$ and $\mathbf{y} = (y_1,\dots,y_d)$).
The polynomials $\varphi_n$ and $\Phi_n$ (defined in (\ref{eq: definition of varphi_n and Phi_n})) become
\begin{equation*}
  \begin{split}
  \varphi_n(\mathbf{x}_1, \mathbf{x}_2,\dots,\mathbf{x}_{n+1}) &= 
   \Psi(1,\mathbf{x}_1) + 2\left(\Psi(1,\mathbf{x}_2) +\cdots + \Psi(1,\mathbf{x}_n)\right) + \Psi(1,\mathbf{x}_{n+1}),\\
  \Phi_n(Z,\mathbf{X}_1,\mathbf{X}_2,\dots,\mathbf{X}_{n+1}) &= 
   \Psi(Z,\mathbf{X}_1) + 2\left(\Psi(Z, \mathbf{X}_2) + \cdots + \Psi(Z, \mathbf{X}_n)\right) +\Psi(Z,\mathbf{X}_{n+1}).
  \end{split}
\end{equation*}
Then the above conditions (i) and (ii) immediately follow from the 
conditions (i) and (ii) of Lemma \ref{lemma: existence of good function, first step}.
\end{proof}
For $n\geq 1$, we define $V_{N,n}\subset V_N$ as the set of $\varphi\in V_N$ satisfying the following:
If a point $[Z:\mathbf{X}_1:\mathbf{X}_2:\cdots:\mathbf{X}_{n+1}]\in \mathbb{P}^{d(n+1)}(\affine)$
satisfies (\ref{eq: critical point set, projectivized}), 
then $Z\neq 0$.
\begin{lemma}  \label{lemma: V_{N,n} is Zariski open}
The set $V_{N,n}$ is a non-empty Zariski open subset of $V_N$.
Here we naturally identify $V_N$ with the affine space $\affine^{\binom{N+2d}{2d}}$.
(Here and in the following in this section, we use only algebraic (not analytic) Zariski open/closed subsets.)
\end{lemma}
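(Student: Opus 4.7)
The plan is to exhibit the complement $V_N \setminus V_{N,n}$ as the image of an explicit closed subvariety under a proper projection, and to read off non-emptiness immediately from Lemma \ref{lemma: existence of good function, iterated}.

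First I would introduce the incidence variety
\[
\mathcal{I}_n := \bigl\{(\varphi,[\mathbf{X}_1:\cdots:\mathbf{X}_{n+1}])\in V_N\times \mathbb{P}^{d(n+1)-1} \bigm| F_i(0,\mathbf{X}_k)=0,\ \rho_{n,k}(\varphi)(0,\mathbf{X}_1,\dots,\mathbf{X}_{n+1})\leq r\bigr\},
\]
where the indices run over $1\leq i\leq R$ and $1\leq k\leq n+1$; this parametrizes the pairs of a polynomial together with a ``point at infinity'' (the $Z=0$ part of $\mathbb{P}^{d(n+1)}(\mathbb{C})$) witnessing failure of (i). By construction, the image of $\mathcal{I}_n$ under the projection $\pi:V_N\times \mathbb{P}^{d(n+1)-1}\to V_N$ is exactly $V_N\setminus V_{N,n}$.

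Next I would verify that $\mathcal{I}_n$ is Zariski closed. The equations $F_i(0,\mathbf{X}_k)=0$ are homogeneous in the $\mathbf{X}_k$-variables and independent of $\varphi$. The rank condition $\rho_{n,k}(\varphi)\leq r$ amounts to the vanishing of all $(r{+}1)\times(r{+}1)$ minors of the matrix (\ref{eq: definition of rho_n(varphi)}); each entry of the top row $\partial\Phi_n/\partial X_{kj}$ is linear in the coefficients of $\varphi$ and homogeneous of degree $N-1$ in $(Z,\mathbf{X}_1,\dots,\mathbf{X}_{n+1})$, while each remaining entry $\partial F_i/\partial X_j$ is a polynomial in $(Z,\mathbf{X}_k)$ homogeneous of degree $\deg F_i-1$ in $(Z,\mathbf{X}_1,\dots,\mathbf{X}_{n+1})$. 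Setting $Z=0$ preserves homogeneity in the $\mathbf{X}_k$-variables; each minor therefore becomes a polynomial in the coefficients of $\varphi$ that is homogeneous of a fixed degree in $(\mathbf{X}_1,\dots,\mathbf{X}_{n+1})$ (the degree depending only on which rows are chosen). So the minors cut out a bona fide closed subscheme of $V_N\times \mathbb{P}^{d(n+1)-1}$. Since $\mathbb{P}^{d(n+1)-1}$ is complete, $\pi$ is a closed map, hence $\pi(\mathcal{I}_n)=V_N\setminus V_{N,n}$ is Zariski closed in $V_N\cong \mathbb{C}^{\binom{N+2d}{2d}}$, which proves openness.

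For non-emptiness, Lemma \ref{lemma: existence of good function, iterated} already hands me a point of $V_{N,n}$: the polynomial $\varphi(\mathbf{x},\mathbf{y})=\Psi(1,\mathbf{x})+\Psi(1,\mathbf{y})$ constructed there satisfies condition (i) of that lemma for \emph{every} $n\geq 1$, which is verbatim the defining property of $V_{N,n}$. The only genuinely technical point — and the mild obstacle to watch out for — is checking that the different rows of the matrix (\ref{eq: definition of rho_n(varphi)}) scale with different weights under rescaling of $(\mathbf{X}_1,\dots,\mathbf{X}_{n+1})$, but this does not affect the argument: each minor still scales by a single power of the rescaling factor, so its vanishing locus descends to a well-defined closed subset of $V_N\times\mathbb{P}^{d(n+1)-1}$, which is all that is needed to apply the properness of $\pi$.
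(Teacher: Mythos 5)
Your proposal is correct and follows essentially the same route as the paper: the incidence variety $\mathcal{I}_n$ you construct is precisely the set $A$ in the paper's proof, and both arguments conclude by projecting via the complete factor $\mathbb{P}^{d(n+1)-1}(\mathbb{C})$ and citing Lemma \ref{lemma: existence of good function, iterated} for non-emptiness. The only difference is that you spell out the homogeneity bookkeeping for the minors, which the paper leaves implicit.
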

\begin{proof}
From Lemma \ref{lemma: existence of good function, iterated}, $V_{N,n}\neq \emptyset$.
We want to show that this is Zariski open.
We define $A\subset V_N\times \mathbb{P}^{d(n+1)-1}(\affine)$ by setting 
\[ A:= \{(\varphi, [\mathbf{X}_1:\cdots:\mathbf{X}_{n+1}])\in V_N\times \mathbb{P}^{d(n+1)-1}(\affine)|\, 
        \text{$\varphi$ and $[0:\mathbf{X}_1:\cdots:\mathbf{X}_{n+1}]$ satisfy (\ref{eq: critical point set, projectivized})}\}.\]
$A$ is a Zariski closed subset of $V_N\times \mathbb{P}^{d(n+1)-1}(\affine)$.
Let $\pi:V_N\times \mathbb{P}^{d(n+1)-1}(\affine)\to V_N$ be the natural projection.
(Here we consider $\pi$ as a map in the algebraic category (not in the analytic category).)
Since $\mathbb{P}^{d(n+1)-1}(\affine)$ is complete (see Mumford \cite[p. 55, Theorem 1]{Mumford-red book}),
$\pi(A)$ is Zariski closed in $V_N$.
Therefore $V_{N,n} = V_N\setminus \pi(A)$ is Zariski open.
\end{proof}
For $n\geq 1$, we define $U_{N,n}\subset V_{N,n}$ by 
\[ U_{N,n} := \{\varphi\in V_{N,n}|\, 
   \varphi_n|_{\underline{X}^{n+1}}:\underline{X}^{n+1}\to \affine \text{ is a Morse function}\}.\]
Here $\varphi_n|_{\underline{X}^{n+1}}$ is the function defined by (\ref{eq: definition of varphi_n and Phi_n}) 
and (\ref{eq: definition of the restriction of varphi_n}).
\begin{lemma} \label{U_{N,n} is non-empty Zariski open}
The set $U_{N,n}$ is a non-empty Zariski open subset of $V_N$.
\end{lemma}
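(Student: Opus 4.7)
The plan closely parallels Lemma \ref{lemma: V_{N,n} is Zariski open}. First, $U_{N,n}$ is non-empty: the polynomial $\varphi$ produced by Lemma \ref{lemma: existence of good function, iterated} lies in $V_{N,n}$ and makes $\varphi_n|_{\underline{X}^{n+1}}$ a Morse function, hence belongs to $U_{N,n}$.

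For Zariski openness, since $V_{N,n}$ is Zariski open by Lemma \ref{lemma: V_{N,n} is Zariski open} and for $\varphi\in V_{N,n}$ every critical point of $\varphi_n|_{\underline{X}^{n+1}}$ lies in the affine chart $Z\neq 0$, it suffices to realize
\begin{equation*}
 V_{N,n}\setminus U_{N,n}=\{\varphi\in V_{N,n}:\varphi_n|_{\underline{X}^{n+1}}\text{ has a degenerate critical point}\}
\end{equation*}
as the intersection with $V_{N,n}$ of a Zariski closed subset of $V_N$. I would construct a Zariski closed incidence variety
\begin{equation*}
 B\subset V_N\times\mathbb{P}^{d(n+1)}(\affine)\times\mathbb{P}^{d(n+1)-1}(\affine)
\end{equation*}
whose points $(\varphi,[Z:\mathbf{X}_1:\cdots:\mathbf{X}_{n+1}],[\mathbf{W}_1:\cdots:\mathbf{W}_{n+1}])$ satisfy three polynomial conditions: (a) the critical point equations (\ref{eq: critical point set, projectivized}); (b) the tangency condition $\sum_j(\partial F_i/\partial X_j)(Z,\mathbf{X}_k)W_{k,j}=0$ for each $i,k$, expressing that $\mathbf{W}_k$ is a tangent vector to $X$ at $[Z:\mathbf{X}_k]$; and (c) a determinantal condition asserting that $\mathbf{W}$ lies in the kernel of the Hessian of $\varphi_n|_{\underline{X}^{n+1}}$, encoded as a rank drop of a suitable bordered-Hessian matrix built from the second derivatives of $\Phi_n$ and the first derivatives of the $F_i$.

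By the completeness of $\mathbb{P}^{d(n+1)}(\affine)\times\mathbb{P}^{d(n+1)-1}(\affine)$, the projection of $B$ to $V_N$ has Zariski closed image (Mumford \cite{Mumford-red book}), exactly as in Lemma \ref{lemma: V_{N,n} is Zariski open}. For $\varphi\in V_{N,n}$ all critical points sit in the affine chart $Z\neq 0$, and such a critical point is degenerate iff some non-zero tangent vector lies in the kernel of the restricted Hessian; hence the image of $B$ intersected with $V_{N,n}$ is precisely $V_{N,n}\setminus U_{N,n}$, and $U_{N,n}$ is Zariski open.

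The main obstacle, I expect, is the precise algebraic formulation of the determinantal condition (c): the Hessian of the restriction of a function to a non-complete-intersection submanifold is naturally expressed via Lagrange multipliers $\lambda_{i,k}$ which are not uniquely determined when $R>r$. The fix is to translate ``$\mathbf{W}$ lies in the kernel of the Hessian of the restriction'' into a rank condition on an explicit bordered Hessian matrix whose minors are polynomials in $(Z,\mathbf{X},\mathbf{W},\varphi)$ and whose vanishing is equivalent to the intrinsic degeneracy, independent of the choice of multipliers; equivalently one may enlarge $B$ by auxiliary factors carrying the multiplier data and then eliminate them by a further proper projection. Once this algebraic packaging is settled all constituent conditions of $B$ are homogeneous polynomial equations, so $B$ is genuinely Zariski closed and the completeness argument applies verbatim.
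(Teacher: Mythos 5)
Your strategy (incidence variety plus projective elimination) is the right one, and your non-emptiness argument is exactly the paper's. But the implementation of the Zariski-openness part has a genuine gap precisely at the place you flag. First, a determinantal condition built only from ``second derivatives of $\Phi_n$ and first derivatives of the $F_i$'' cannot encode degeneracy of the restricted Hessian: because $\underline{X}^{n+1}$ is curved, the restricted Hessian at a critical point involves the second fundamental form, i.e.\ second derivatives of the $F_i$ weighted by Lagrange multipliers, so those terms cannot be omitted. Second, your proposed fix --- adjoining affine multiplier variables $\lambda$ (and $\mu$) to $B$ and then ``eliminating them by a further proper projection'' --- does not work as stated: the multiplier factor is an affine space $\affine^{R(n+1)}$, not a complete variety, so the projection forgetting the multipliers is not proper, and the image of a closed set under it need not be closed. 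One can verify the image is constructible by Chevalley, but that alone is not openness. So the algebraic packaging you defer is not a routine detail; it is exactly where the argument could break.

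The paper avoids the issue entirely by never projectivizing the degeneracy condition. It defines $B$ intrinsically as the locus of degenerate critical points inside the affine variety $V_{N,n}\times\underline{X}^{n+1}$ (Zariski closedness there is standard: the degeneracy locus of a family of functions on a smooth affine variety is the vanishing locus of a determinant of the Hessian, viewed as a bundle map $TX\to T^*X$ along the critical locus). The crucial extra observation is that, because $\varphi\in V_{N,n}$ forces every solution of (\ref{eq: critical point set, projectivized}) to lie in the affine chart $Z\neq 0$, the closed set $A\subset V_{N,n}\times\mathbb{P}^{d(n+1)}(\affine)$ already satisfies $A\subset i(V_{N,n}\times\underline{X}^{n+1})$, hence $i(B)\subset A\subset i(V_{N,n}\times\underline{X}^{n+1})$ shows that $i(B)$, a priori closed only in the open chart, is in fact closed in a closed set and therefore closed in the full projective product. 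After that, completeness of $\mathbb{P}^{d(n+1)}(\affine)$ applies exactly as you intended. In short: keep your overall plan, but replace the explicit Hessian/multiplier incidence by the intrinsic affine degeneracy locus, and use the containment $i(B)\subset A$ to get closedness in the projective ambient for free.
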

\begin{proof}
From Lemma \ref{lemma: existence of good function, iterated}, we have $U_{N,n}\neq \emptyset$.
We define $A\subset V_{N,n}\times \mathbb{P}^{d(n+1)}(\affine)$ as the set of 
$(\varphi, [Z:\mathbf{X}_1:\dots:\mathbf{X}_{n+1}])\in V_{N,n}\times \mathbb{P}^{d(n+1)}(\affine)$
satisfying (\ref{eq: critical point set, projectivized}).
$A$ is a Zariski closed subset of $V_{N,n}\times \mathbb{P}^{d(n+1)}(\affine)$.
Let $i: V_{N,n}\times (\affine^d)^{n+1} \hookrightarrow V_{N,n}\times \mathbb{P}^{d(n+1)}(\affine)$ be the natural 
open immersion defined by 
$(\varphi, ([1:\mathbf{x}_1], \dots, [1:\mathbf{x}_{n+1}]))\mapsto (\varphi, [1:\mathbf{x}_1:\dots:\mathbf{x}_{n+1}])$.
From the definition of $V_{N,n}$, we have $A\subset i(V_{N,n}\times \underline{X}^{n+1})$.

We define $B \subset V_{N,n}\times \underline{X}^{n+1}$ as the set of 
$\left(\varphi, ([1:\mathbf{x}_1],\dots,[1:\mathbf{x}_{n+1}])\right)\in V_{N,n}\times \underline{X}^{n+1}$
such that $([1:\mathbf{x}_1],\dots,[1:\mathbf{x}_{n+1}])$ is a 
degenerate critical point of $\varphi_n|_{\underline{X}^{n+1}}:\underline{X}^{n+1}\to \affine$.
$B$ is a Zariski closed subset of $V_{N,n}\times \underline{X}^{n+1}$.
Since $i(B)\subset A\subset i(V_{N,n}\times \underline{X}^{n+1})$,
$i(B)$ is a Zariski closed subset of $A$.
Hence $i(B)$ is Zariski closed in $V_{N,n}\times \mathbb{P}^{d(n+1)}(\affine)$.

Let $\pi:V_{N,n}\times \mathbb{P}^{d(n+1)}(\affine) \to V_{N,n}$ be the natural projection.
Since $\mathbb{P}^{d(n+1)}(\affine)$ is complete (\cite[p. 55, Theorem 1]{Mumford-red book}),
$\pi(i(B))$ is Zariski closed in $V_{N,n}$.
Therefore $U_{N,n} = V_{N,n}\setminus \pi(i(B))$ is Zariski open.
\end{proof}
We need the following general (and standard) fact.
\begin{lemma} \label{lemma: Zariski open set is open dense}
Let $K$ be a positive integer, and let $U$ be a non-empty Zariski open subset of $\affine^K$.
Then $U\cap \mathbb{R}^K$ is open dense in $\mathbb{R}^K$ with respect to the Euclidean topology.
\end{lemma}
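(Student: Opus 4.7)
The plan is to reduce the statement to the classical fact that a non-zero real polynomial on $\mathbb{R}^K$ has a zero set with empty Euclidean interior. First I would observe that, since $U$ is a non-empty Zariski open subset of $\affine^K$, its complement $\affine^K \setminus U$ equals the zero locus $V(I)$ of some ideal $I \subset \affine[x_1, \dots, x_K]$, and the non-emptiness of $U$ forces $I$ to contain some polynomial $f$ with $f \not\equiv 0$.

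The openness part is essentially tautological: a Zariski open subset of $\affine^K$ is the complement of the common zero locus of continuous (polynomial) functions, hence open in the Euclidean topology on $\affine^K$. Since the inclusion $\mathbb{R}^K \hookrightarrow \affine^K$ is continuous for the Euclidean topologies, $U \cap \mathbb{R}^K$ is open in $\mathbb{R}^K$.

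For density, the plan is to show that $V(I) \cap \mathbb{R}^K$ has empty Euclidean interior in $\mathbb{R}^K$; since $V(I) \cap \mathbb{R}^K \subset \{x \in \mathbb{R}^K : f(x) = 0\}$, it suffices to verify this for the latter set. Here the one step of actual content is the passage from a complex polynomial to a real one: write $f = g + \sqrt{-1}\, h$ with $g, h \in \mathbb{R}[x_1, \dots, x_K]$. Because $f \not\equiv 0$, at least one of $g, h$ is non-zero; say $g \not\equiv 0$. Then
\[
 \{x \in \mathbb{R}^K : f(x) = 0\} \;\subset\; \{x \in \mathbb{R}^K : g(x) = 0\},
\]
and the right-hand side has empty interior by the classical fact that a real polynomial vanishing on a non-empty Euclidean open subset of $\mathbb{R}^K$ must be identically zero (a straightforward induction on $K$ by restricting to coordinate lines).

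There is no serious obstacle in this argument; the only point worth emphasising is the real/imaginary decomposition, which is what lets one apply the real-variable vanishing principle to an object originally defined over $\affine$.
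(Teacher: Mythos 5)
Your proof is correct and follows essentially the same route as the paper: both reduce density to the fact that a non-zero polynomial cannot vanish identically on a Euclidean-open subset of $\mathbb{R}^K$. The only difference is cosmetic: the paper states this vanishing principle directly for $f \in \affine[x_1,\dots,x_K]$, leaving the real/imaginary decomposition implicit, whereas you spell out the passage to a non-zero real polynomial $g$ before applying the classical real-variable statement.
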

\begin{proof}
$U\cap \mathbb{R}^K$ is obviously open.
Note the following fact:
If $f(x_1, \dots,x_K)\in \affine[x_1,\dots,x_K]$ vanishes over a non-empty open set (of the 
Euclidean topology) in $\mathbb{R}^K$, then $f=0$.
Therefore $\mathbb{R}^K\setminus U$ cannot have an interior point in $\mathbb{R}^K$.
Hence $U\cap \mathbb{R}^K$ is dense in $\mathbb{R}^K$.
\end{proof}
We set $V_N^{\mathbb{R}} := V_N\cap \mathbb{R}[x_1,\dots,x_d,y_1,\dots,y_d]$ and 
$U_{N,n}^{\mathbb{R}} := U_{N,n}\cap \mathbb{R}[x_1,\dots,x_d,y_1,\dots,y_d]$.
We naturally identify $V_{N}^{\mathbb{R}}$ with the Euclidean space $\mathbb{R}^{\binom{N+2d}{2d}}$.
From Lemma \ref{U_{N,n} is non-empty Zariski open} and Lemma \ref{lemma: Zariski open set is open dense}, 
$U_{N,n}^{\mathbb{R}}$ is open dense in $V_N^{\mathbb{R}}$ with respect to the Euclidean topology.
Set $U_N^{\mathbb{R}} := \bigcap_{n\geq 1}U_{N,n}^{\mathbb{R}}$.
$U_N^{\mathbb{R}}$ is residual (and hence dense) in $V_N^{\mathbb{R}}$ with respect to the Euclidean topology.
If $\varphi\in U_N^{\mathbb{R}}$, then for all $n\geq 1$ 
the functions $\varphi_n|_{\underline{X}^{n+1}}:\underline{X}^{n+1}\to \affine$ are Morse functions.
Recall that we have identified $M$ with $\underline{X}^{\mathbb{R}} = \underline{X}\cap \mathbb{R}^d$.
Hence the above implies that for all $n\geq 1$ the functions
$\varphi_n|_{M^{n+1}}:M^{n+1}\to \mathbb{R}$ are Morse functions.

Then we can prove Theorem \ref{thm: main theorem}:
\begin{proof}[Proof of Theorem \ref{thm: main theorem}]
We define a set $\mathcal{D} \subset C^\infty(M\times M)$ by 
\begin{equation}\label{eq: definition of D}
 \mathcal{D} := \bigcup_{N\geq 1} \{\varphi|_{M\times M}|\, \varphi\in U_N^{\mathbb{R}}\}.
\end{equation}
We will shows that $\mathcal{D}$ is dense in $C^\infty(M\times M)$ 
and that it satisfies the conditions (i) and (ii) in Theorem \ref{thm: main theorem}.
The condition (i) immediately follows from the above argument.

Let $f\in C^\infty(M\times M)$, and let $W$ be an open neighborhood of $f$ in $C^\infty(M\times M)$.
There exists a real polynomial $\phi\in \mathbb{R}[x_1,\dots,x_d,y_1,\dots,y_d]$
such that $\phi|_{M\times M}\in W$ (Weierstrass's theorem).
Set $N := \max(1, \deg \phi)$. We have $\phi\in V_N^{\mathbb{R}}$.
Since $U_N^{\mathbb{R}}$ is dense in $V_N^{\mathbb{R}}$, there exists $\varphi\in U_N^{\mathbb{R}}$ such that 
$\varphi|_{M\times M}\in W$.
This shows that $\mathcal{D}$ is dense in $C^\infty(M\times M)$.

Next we want to show that $\mathcal{D}$ satisfies the condition (ii).
Let $\varphi\in U_N^{\mathbb{R}}$, and let $n$ be a positive integer.
Critical points of $\varphi_n|_{M^{n+1}}:M^{n+1}\to \mathbb{R}$ are also critical points of
$\varphi_n|_{\underline{X}^{n+1}}:\underline{X}^{n+1}\to \affine$. Hence 
\begin{equation}  \label{eq: real critical points v.s. projectivized critical points}
  \#\Cr(\varphi_n|_{M^{n+1}}) \leq \#\{[Z:\mathbf{X}_1:\cdots:\mathbf{X}_{n+1}]\in \mathbb{P}^{d(n+1)}(\affine)|
    \, \text{the condition (\ref{eq: critical point set, projectivized})}\}.
\end{equation}
From the definition of $U_N^{\mathbb{R}}$, the condition (\ref{eq: critical point set, projectivized}) 
implies $Z\neq 0$. 
Moreover all critical points of $\varphi_n|_{\underline{X}^{n+1}}:\underline{X}^{n+1}\to \affine$ are
non-degenerate.
Hence the right-hand-side of (\ref{eq: real critical points v.s. projectivized critical points}) is finite.
We can evaluate it by using B\'{e}zout's theorem \cite[p. 148, Example 8.4.7]{Fulton}
(i.e. by counting the degrees of the equations).
The condition $\rho_{n,k}(Z, \mathbf{X}_1,\dots,\mathbf{X}_{n+1})\leq r$ is equivalent to 
the condition that all $(r+1)$-th sub-determinants of the matrix (\ref{eq: definition of rho_n(varphi)}) are zero.
Set $A:=\max(N, \deg F_1, \dots, \deg F_R)$.
By B\'{e}zout's theorem, the right-hand-side of (\ref{eq: real critical points v.s. projectivized critical points})
is bounded by 
\[ \left(A^{R+(r+1)\binom{d}{r+1}\binom{R+1}{r+1}}\right)^{n+1}.\]
(We can directly evaluate $\#\Cr(\varphi_n|_{M^{n+1}})$ by \cite[Theorem 2]{Milnor-real varieties} 
instead of B\'{e}zout's theorem, 
although the argument of \cite{Milnor-real varieties} also uses B\'{e}zout's theorem.)
\end{proof}

\subsection{Proof of Proposition \ref{prop: algebraic model of our manifold}} 
\label{section:  Proof of Proposition (algebraic model)}
Let $M$ be a compact connected $C^\infty$ manifold without boundary.
We will prove Proposition \ref{prop: algebraic model of our manifold} in this subsection.
We need the following proposition.
Professor Masahiro Shiota kindly explained this result to the authors.
Probably Proposition \ref{prop: algebraic model of our manifold, preliminary} and its proof
are well-known to some specialists of real algebraic geometry.
For example, Kucharz \cite[p. 128]{Kucharz} describes the sketch of the proof of
almost the same result.
\begin{proposition} \label{prop: algebraic model of our manifold, preliminary}
There exist homogeneous polynomials $G_i(X_0,X_1,\dots,X_e)$
$(1\leq i\leq S)$ in $\mathbb{R}[X_0,X_1,\dots,X_e]$ satisfying the following conditions.

\noindent 
(i) The scheme $\mathrm{Proj}(\mathbb{R}[X_0,X_1,\dots,X_e]/(G_1, \dots,G_S))$ is an 
integral regular scheme.

\noindent 
(ii) 
The space 
\[ \{[X_0:X_1:\cdots:X_e]\in \mathbb{P}^e(\mathbb{R})|\, G_i(X_0,X_1,\dots,X_e)=0 \, (1\leq i\leq S)\} \]
 is diffeomorphic to $M$. 
\end{proposition}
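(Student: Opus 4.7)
The plan is to combine the Nash--Tognoli--King theorem, which produces an affine real algebraic model of $M$, with Hironaka's resolution of singularities, which compactifies it smoothly inside projective space; this is the route sketched by Kucharz.

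First I would apply Nash--Tognoli--King to realise $M$ as a compact nonsingular real algebraic subset $V\subset \mathbb{R}^n$ whose complexification is also nonsingular and whose defining real ideal is prime, so that the affine $\mathbb{R}$-scheme $V$ is smooth and geometrically integral. Before compactifying, I would arrange the embedding so that the Zariski closure $\bar V$ of $V$ in $\mathbb{P}^n_{\mathbb{R}}$ has no real points on the hyperplane at infinity $\{X_0=0\}$, that is, $\bar V(\mathbb{R})=V$. One way to secure this is to begin with $M$ realised as a real algebraic subset of a sphere, since the projective completion $\{X_1^2+\cdots+X_{n+1}^2=X_0^2\}$ of $S^n\subset \mathbb{R}^{n+1}$ meets $\{X_0=0\}$ only in complex points, a property inherited by any real algebraic subset of $S^n$. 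Such an embedding can be produced by combining Nash--Tognoli--King with standard modifications, or by invoking the stronger form in the literature.

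Next I would regard $\bar V$ as a reduced, irreducible projective $\mathbb{R}$-scheme. On the affine chart $\{X_0\neq 0\}$ it coincides with $V$ and is therefore smooth, but over $\mathbb{C}$ it may acquire singularities inside the complex locus $\bar V\cap\{X_0=0\}$. By Hironaka's resolution of singularities, which applies in characteristic zero and descends from $\mathbb{C}$ to $\mathbb{R}$ because the singular locus of a scheme is functorial and hence $\mathrm{Gal}(\mathbb{C}/\mathbb{R})$-equivariant, I obtain a proper birational $\mathbb{R}$-morphism $\pi:\tilde V\to \bar V$ with $\tilde V$ smooth and projective over $\mathbb{R}$, realised as a finite sequence of blow-ups whose centres all lie inside $\bar V\cap\{X_0=0\}$. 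Since that locus contains no real points by construction, neither does the exceptional locus of $\pi$; hence $\pi$ is an isomorphism over every real point of $\bar V$, and $\tilde V$ is a smooth, projective, integral $\mathbb{R}$-scheme with $\tilde V(\mathbb{R})\cong V(\mathbb{R})\cong M$. Finally, choosing a very ample line bundle on $\tilde V$ defined over $\mathbb{R}$ yields a closed embedding $\tilde V\hookrightarrow \mathbb{P}^e_{\mathbb{R}}$; taking $G_1,\dots,G_S\in\mathbb{R}[X_0,\dots,X_e]$ to be homogeneous generators of the saturated prime ideal of this embedding gives $\mathrm{Proj}(\mathbb{R}[X_0,\dots,X_e]/(G_1,\dots,G_S))=\tilde V$, proving (i), while its real locus is diffeomorphic to $M$, proving (ii).

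The main obstacle is the preliminary normalisation that forces the entire singular locus of $\bar V$ into the complex part of the hyperplane at infinity: without it, Hironaka's blow-ups can in principle add real points on the exceptional divisor, making $\tilde V(\mathbb{R})$ strictly larger than $M$ with no way to remedy this within the algebraic framework. The spherical embedding is the essential trick that rules this out. The remaining technicalities---the $\mathrm{Gal}(\mathbb{C}/\mathbb{R})$-equivariance of Hironaka, the primality of the homogeneous ideal of a smooth projective irreducible $\mathbb{R}$-subvariety, and the existence of ample line bundles over $\mathbb{R}$---are all standard in characteristic zero.
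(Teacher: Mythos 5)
Your strategy---produce a nonsingular real algebraic model via Nash--Tognoli--King, then apply Hironaka so that the resolution does not disturb the real locus---is exactly the paper's strategy, and your key insight (that blow-up centres contain no real points, so $\tilde V(\mathbb{R}) \cong V(\mathbb{R}) \cong M$) is the paper's key insight. The differences are in the packaging, and in each case the paper's route is a bit shorter. The paper invokes King's theorem in its projective form, realising $M$ directly as a nonsingular real algebraic subset $V$ of $\mathbb{P}^p(\mathbb{R})$; there is no compactification step, hence no need for the sphere trick or the constraint on real points at infinity. Your two auxiliary claims---that the complexification of the affine model is nonsingular and that the resolution centres lie inside $\{X_0=0\}$---are neither needed nor automatically true: the projective scheme $X$ may well have complex singular points in the affine chart, and this is harmless. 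The only fact actually used is that every image of an $\mathbb{R}$-morphism $\mathrm{Spec}\,\mathbb{R}\to X$ is a regular point of $X$ (because $V$ is nonsingular as a real algebraic set), so that the subscheme $D$ supported on the singular locus satisfies $X(\mathbb{R})\cap D=\emptyset$. The Galois-descent discussion for Hironaka is likewise superfluous: Hironaka's theorem holds over any field of characteristic zero, in particular over $\mathbb{R}$, so one resolves $X$ as an $\mathbb{R}$-scheme directly, as the paper does by citing Hironaka's Main Theorem I. With those trimmings your argument coincides with the paper's.
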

\begin{proof}
The idea of the proof is the same as \cite[p. 128]{Kucharz}.
From Nash-Tognoli-King's theorem (\cite{King} and \cite[Chapter 14, Remark 14.1.12]{Bochnak-Coste-Roy}),
there exists a nonsingular real algebraic set $V\subset \mathbb{P}^p(\mathbb{R})$
such that $V$ is diffeomorphic to $M$.
(For the meaning of the term ``nonsingular real algebraic set'', see \cite[Section 3.3]{Bochnak-Coste-Roy}.)
Since $V\cong M$ is connected, it is also Zariski connected. 
Hence $V$ is irreducible in Zariski topology.
(Since $V$ is nonsingular, irreducible components of $V$ do not intersect with each other.
Hence every irreducible component of $V$ is Zariski open and Zariski closed.
See \cite[Theorem 2.8.3, Proposition 3.3.10]{Bochnak-Coste-Roy}.)

Let $I\subset \mathbb{R}[X_0,X_1,\dots,X_p]$ be the homogeneous ideal generated by 
homogeneous polynomials $f\in \mathbb{R}[X_0,X_1,\dots,X_p]$ vanishing on $V$.
Since $V$ is irreducible, $I$ is a prime ideal.
Set $X := \mathrm{Proj}(\mathbb{R}[X_0,X_1,\dots,X_p]/I)$.
$X$ is an integral scheme over $\mathbb{R}$.
From \cite[p. 132, Main theorem I]{Hironaka}, there is a closed subscheme $D$ of $X$ satisfying the following two 
conditions (a) and (b):

\noindent 
(a) The set of points of $D$ is equal to the set of singular points of $X$.

\noindent 
(b) If $m:\Tilde{X}\to X$ is the monoidal transformation of $X$ with center $D$, then 
$\Tilde{X}$ is an integral regular scheme over $\mathbb{R}$.

$m|_{\tilde{X}\setminus m^{-1}(D)}: \tilde{X}\setminus m^{-1}(D) \to X\setminus D$ is 
(algebraically) isomorphic.
For generalities on monoidal transformation (or blowing-up), see Hironaka \cite[pp. 123-130]{Hironaka} and
Hartshorne \cite[pp. 160-169]{Hartshorne}.
Let $X(\mathbb{R})$ (resp. $\Tilde{X}(\mathbb{R})$) be the set of $\mathbb{R}$-morphisms $\mathrm{Spec}\,\mathbb{R} \to X$
(resp. $\mathrm{Spec}\,\mathbb{R}\to \Tilde{X}$).
The images of all $\mathbb{R}$-morphisms $\mathrm{Spec}\,\mathbb{R}\to X$ are regular points of $X$.
Hence $X(\mathbb{R}) \cap D=\emptyset$.
Therefore the natural map $\Tilde{X}(\mathbb{R}) \to X(\mathbb{R})$ is a diffeomorphism.
($X(\mathbb{R})$ and $\Tilde{X}(\mathbb{R})$ naturally become $C^\infty$ manifolds.)
In particular they are both diffeomorphic to $V\cong M$.

Since $X$ is projective over $\mathbb{R}$, $\Tilde{X}$ is also projective over $\mathbb{R}$.
Hence there is a homogeneous ideal $J \subset \mathbb{R}[X_0,X_1,\dots,X_e]$
such that $\Tilde{X}$ is isomorphic to $\mathrm{Proj}(\mathbb{R}[X_0,X_1,\dots,X_e]/J)$ over $\mathbb{R}$.
Let $G_1, \dots,G_S$ be homogeneous polynomials generating $J$.
Then these polynomials satisfy the above conditions (i) and (ii).
\end{proof}
Set $d:=(e+1)^2-1$.
Let $\mathbb{R}[X_{00},\dots,X_{ee}]$ be the polynomial ring of the $d+1$ variables $X_{ij}$ $(0\leq i,j\leq e)$.
Consider a $\mathbb{R}$-homomorphism from $\mathbb{R}[X_{00},\dots,X_{ee}]$ to $\mathbb{R}[X_0,\dots,X_e]$ defined by 
\[ X_{00}\mapsto \sum_{i=0}^e X_i^2, \quad X_{ij}\mapsto X_iX_j \> ((i,j)\neq (0,0)).\]
Let $f:\mathrm{Proj}(\mathbb{R}[X_0,\dots,X_e])\to \mathrm{Proj}(\mathbb{R}[X_{00},\dots,X_{ee}])$
be the $\mathbb{R}$-morphism defined by the above homomorphism.
The map $f$ is a closed immersion (cf. Segre embedding).
Moreover $f$ satisfies 
\[ f(\mathbb{P}^e(\mathbb{R}))
   \subset \mathbb{R}^d := \{[X_{00}:\cdots:X_{ee}]\in \mathbb{P}^d(\mathbb{R})|\, X_{00}\neq 0\}.\]
(This fact is used in \cite[p. 72]{Bochnak-Coste-Roy} to show that real projective spaces are 
affine varieties.)
From this argument and Proposition \ref{prop: algebraic model of our manifold, preliminary} we get the following:
\begin{corollary} \label{cor: algebraic model of our manifold}
There exist homogeneous polynomials
$F_i(X_0,X_1, \dots, X_d)$ $(1\leq i\leq R)$ in $\mathbb{R}[X_0,X_1,\dots,X_d]$ satisfying the
following conditions (i) and (ii):

\noindent 
(i) $\mathrm{Proj}\left(\mathbb{R}[X_0,X_1,\dots,X_d]/(F_1,\dots,F_R)\right)$ is an integral regular scheme.

\noindent 
(ii) The space 
\[ \{[X_0:X_1:\cdots:X_d]\in \mathbb{P}^d(\mathbb{R})|\, F_i(X_0,X_1,\dots,X_d)=0 \, (1\leq i\leq R)\} \]
is diffeomorphic to $M$.
Moreover, if a point $[X_0:X_1:\cdots:X_d]\in \mathbb{P}^d(\mathbb{R})$ satisfies $F_i(X_0,\dots,X_d)=0$ $(1\leq i\leq R)$,
then $X_0\neq 0$.
\end{corollary}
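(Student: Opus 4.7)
The plan is to combine Proposition \ref{prop: algebraic model of our manifold, preliminary} with the closed immersion $f$ constructed in the discussion immediately preceding the corollary statement. First I would apply that proposition to obtain homogeneous polynomials $G_1,\ldots,G_S \in \mathbb{R}[X_0,\ldots,X_e]$ such that $Y := \mathrm{Proj}(\mathbb{R}[X_0,\ldots,X_e]/(G_1,\ldots,G_S))$ is integral regular with $Y(\mathbb{R})$ diffeomorphic to $M$. Composing the closed immersion $Y \hookrightarrow \mathrm{Proj}(\mathbb{R}[X_0,\ldots,X_e])$ with $f$ yields a closed immersion $Y \hookrightarrow \mathrm{Proj}(\mathbb{R}[X_{00},\ldots,X_{ee}])$, whose scheme-theoretic image is a closed subscheme cut out by some homogeneous ideal $I$. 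I would take $F_1,\ldots,F_R$ to be homogeneous generators of $I$, after relabelling $X_{00},X_{01},\ldots,X_{ee}$ as $X_0,X_1,\ldots,X_d$ with $X_{00}$ playing the role of $X_0$.

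Condition (i) is then immediate: the image of a closed immersion with its induced scheme structure is isomorphic to the source, so
\[
\mathrm{Proj}(\mathbb{R}[X_0,\ldots,X_d]/(F_1,\ldots,F_R)) \;\cong\; Y
\]
is integral and regular. For condition (ii), the restriction $f|_{\mathbb{P}^e(\mathbb{R})}$ is a smooth map into $\mathbb{P}^d(\mathbb{R})$, and because $X_{00} \mapsto \sum_i X_i^2 > 0$ on $\mathbb{P}^e(\mathbb{R})$ it lands entirely in the affine chart $\{X_0 \neq 0\}$, where it takes the explicit form
\[
[X_0:\cdots:X_e] \;\longmapsto\; \bigl(X_iX_j \big/ \textstyle\sum_k X_k^2\bigr)_{(i,j)\ne(0,0)}.
\]
This is the classical embedding realizing $\mathbb{P}^e(\mathbb{R})$ as a nonsingular real algebraic subset of $\mathbb{R}^d$, cf.\ \cite[p.~72]{Bochnak-Coste-Roy}, and in particular a smooth embedding. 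Restricting to $Y(\mathbb{R})$ thus produces a diffeomorphism between $M \cong Y(\mathbb{R})$ and the real points of our image scheme, and those real points all satisfy $X_0 \neq 0$ by the same sum-of-squares observation.

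The main point requiring care is the identification of the real points of the scheme-theoretic image with the image of $Y(\mathbb{R})$ under $f$, and the upgrade of the resulting set-theoretic bijection to a diffeomorphism. Both facts are standard: a closed immersion of $\mathbb{R}$-schemes induces an injection on $\mathbb{R}$-rational points whose image is exactly the real points of the closed subscheme, and since $Y$ and its image are both regular of the same (relative) dimension, the real loci are smooth manifolds of matching dimension on which the bijective polynomial map is necessarily a diffeomorphism. Once these two observations are spelled out, the corollary falls directly out of Proposition \ref{prop: algebraic model of our manifold, preliminary} and the properties of $f$ established above.
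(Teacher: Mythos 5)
Your proposal is correct and follows essentially the same route the paper intends: compose the embedding of $Y=\mathrm{Proj}(\mathbb{R}[X_0,\dots,X_e]/(G_1,\dots,G_S))$ from Proposition \ref{prop: algebraic model of our manifold, preliminary} with the closed immersion $f$ (the twisted degree-2 Veronese sending $X_{00}\mapsto\sum X_i^2$), take homogeneous generators of the resulting ideal as $F_1,\dots,F_R$, and use that $\sum X_i^2>0$ on $\mathbb{P}^e(\mathbb{R})$ to force $X_0\neq 0$ on the real locus. The paper leaves these steps implicit (``From this argument and Proposition \ref{prop: algebraic model of our manifold, preliminary} we get the following''), and you have simply spelled them out.
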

\begin{proof}[Proof of Proposition \ref{prop: algebraic model of our manifold}]
Let $F_i(X_0,X_1, \dots, X_d)$ $(1\leq i\leq R)$ be the polynomials introduced in Corollary 
\ref{cor: algebraic model of our manifold}.
The condition (i) of Corollary \ref{cor: algebraic model of our manifold} implies that the scheme
\[ \mathrm{Proj}\left(\affine[X_0,\dots,X_d]/(F_1,\dots,F_R)\right) 
= \mathrm{Proj}\left(\mathbb{R}[X_0,\dots,X_d]/(F_1,\dots,F_R)\right)\times_{\mathbb{R}}\affine \]
is an equidimensional regular scheme.

Let $X$ be the complex submanifold of $\mathbb{P}^d(\affine)$ which $F_i$ $(1\leq i\leq R)$ define.
From Bertini's theorem (Hartshorne \cite[p. 179, Theorem 8.18]{Hartshorne}),
there exists a non-empty Zariski open set $U\subset \mathbb{P}^d(\affine)$ such that 
for any $[a_0:a_1:\cdots:a_d]\in U$ the hyperplane $a_0X_0+a_1X_1+\cdots+a_dX_d=0$ transversally 
intersects with $X$ in $\mathbb{P}^d(\affine)$.
$U\cap \mathbb{P}^d(\mathbb{R})$ is open dense in $\mathbb{P}^d(\mathbb{R})$ with respect to the 
Euclidean topology. (See Lemma \ref{lemma: Zariski open set is open dense}.)

We have $X\cap \mathbb{R}^d = X\cap \mathbb{P}^d(\mathbb{R})$. Hence $X\cap \mathbb{R}^d$ is compact.
Therefore there exists $[a_0:a_1:\cdots:a_d]\in U\cap \mathbb{P}^d(\mathbb{R})$ such that 
the hyperplane $a_0X_0+a_1X_1+\cdots+a_dX_d=0$ does not intersect with $X\cap \mathbb{R}^d$.
Then, by using a real projective transformation which transforms $a_0X_0+a_1X_1+\cdots+a_dX_d=0$ to $X_0=0$,
we can adjust the polynomials $F_i$ 
so that they satisfy the conditions (i), (ii), (iii) in Proposition \ref{prop: algebraic model of our manifold}.
\end{proof}

\section{Proof of Theorem \ref{thm:growth}}

In this section, for a $C^\infty$ manifold $M$ (not necessarily compact), 
the space $C^\infty(M)$ of real valued $C^\infty$ functions on $M$ 
is endowed with the $C^\infty$ compact-open topology.

\subsection{Interpretation to a dynamical problem}
\label{sec:generating}
For a $C^\infty$ function $H$ on $\RR^2$,
 we denote the partial derivative of $H$ with respected to
 the first and the second coordinates by $\del_1 H$ and $\del_2 H$
 respectively.
By $p_1$ and $p_2$, we denote the projection from $\RR^2$
 to the first and the second coordinate, respectively.
We say that a map $f$ from $\RR^2$  to a set $S$ is
 {\it $(2\pi\ZZ)^2$-periodic}
 if $f(p+m)=f(p)$ for any $p \in \RR^2$ and $m \in (2\pi\ZZ)^2$.
Let $\cH$ be the space of $C^\infty$ functions $H$ on $\RR^2$
 such that $\del_1\del_2H>0$, and
 both $\del_1 H-p_2$ and $\del_2 H-p_1$ are $(2\pi\ZZ)^2$-periodic.
We denote the identity map of $\RR^2$ by $\Id$ and
 define a diffeomorphism $\Theta$ of $\RR^2$ by $\Theta(x,y)=(y,-x)$.
Let $\cD$ be the set of $C^\infty$ area-preserving diffeomorphisms $F$
 of $\RR^2$ such that
 $F-\Theta$ is $(2\pi\ZZ)^2$-periodic and {\it the twist condition}
\begin{equation}
\label{eqn:twist}
\del_2 (p_1 \circ F)>0
\end{equation}
 holds.
We endow the $C^\infty$ compact-open topology to $\cD$.

The following is a classical result known as the correspondence
 between twist maps and their generating functions.
\begin{proposition}
\label{prop:generating} 
There exists a continuous map $\Phi:\cH \ra \cD$ 
 which satisfies the following properties.

\noindent
(i)  $(x',y')=\Phi(H)(x,y)$ if and only if 
 $(y,y')=(\del_1 H(x,x'), -\del_2 H(x,x'))$
 for any $H \in \cH$ and $(x,y,x',y') \in \RR^4$.

\noindent 
(ii) The map $(\Phi,\ev_0):\cH \ra \cD \times \RR$
 is a homeomorphism, where
 the map $\ev_0:\cH \ra \RR$ is given by $\ev_0(H)=H(0,0)$.
\end{proposition}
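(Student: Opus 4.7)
The plan is to build $\Phi$ by solving the generating-function relations implicitly, and then to invert $(\Phi, \ev_0)$ by a line-integral construction in $(x, x')$-coordinates, with area-preservation providing the integrability condition that makes the inverse construction succeed.

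Given $H \in \cH$, the map $x' \mapsto \del_1 H(x, x')$ is strictly increasing since its derivative $\del_1 \del_2 H$ is positive, and it grows like $x' + O(1)$ because $\del_1 H - p_2$ is bounded by periodicity; hence it is a smooth diffeomorphism of $\RR$ for every fixed $x$. The implicit function theorem produces a unique smooth $x'(x, y)$ with $y = \del_1 H(x, x'(x, y))$, and I set $\Phi(H)(x, y) := (x'(x, y),\, -\del_2 H(x, x'(x, y)))$. To verify $\Phi(H) \in \cD$: the twist condition $\del_2(p_1 \circ \Phi(H)) = 1/(\del_1 \del_2 H) > 0$ is immediate; working in $(x, x')$-coordinates on the domain, both $dx \wedge dy$ and the pullback of $dx' \wedge dy'$ equal $\del_1 \del_2 H \cdot dx \wedge dx'$, so $\Phi(H)$ is area-preserving; and periodicity of $\Phi(H) - \Theta$ follows by tracking how the unique solution $x'$ shifts when $(x, y)$ is translated by $(2\pi m, 2\pi n) \in (2\pi\ZZ)^2$, using the periodicity of $\del_i H - p_{3-i}$. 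The inverse of $\Phi(H)$ is constructed by the symmetric argument, solving $y' = -\del_2 H(x, x')$ for $x$ using the periodicity of $\del_2 H - p_1$ and monotonicity in $x$. Continuity of $\Phi$ in the $C^\infty$ compact-open topology is a standard consequence of smooth dependence on parameters in the implicit function theorem.

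For (ii), given $(F, c) \in \cD \times \RR$, the twist condition together with the periodicity of $F - \Theta$ makes $y \mapsto p_1(F(x, y))$ a smooth diffeomorphism of $\RR$ for every fixed $x$, so there is a unique smooth $y = y(x, x')$ with $p_1(F(x, y(x, x'))) = x'$; set $y'(x, x') := p_2(F(x, y(x, x')))$. The one-form $\omega := y\, dx - y'\, dx'$ on $\RR^2$ satisfies $d\omega = -(\del_{x'} y + \del_x y')\, dx \wedge dx'$, and a short computation in $(x, x')$-coordinates identifies $\del_{x'} y + \del_x y' = 0$ with $F^*(dx' \wedge dy') = dx \wedge dy$; thus area-preservation of $F$ is exactly the closedness of $\omega$. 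Since $\RR^2$ is simply connected, $\omega$ is exact, and there is a unique smooth $H$ with $dH = \omega$ and $H(0, 0) = c$. The three defining conditions of $\cH$ now translate directly: $\del_1 \del_2 H = \del_{x'} y = 1/\del_y(p_1 \circ F) > 0$ from the twist, while periodicity of $\del_1 H - p_2 = y - x'$ and $\del_2 H - p_1 = -y' - x$ follow from the shifts $y(x + 2\pi m, x' + 2\pi n) = y(x, x') + 2\pi n$ and $y'(x + 2\pi m, x' + 2\pi n) = y'(x, x') - 2\pi m$ that are forced by $F - \Theta$ being $(2\pi\ZZ)^2$-periodic. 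Continuity of $(\Phi, \ev_0)^{-1}$ follows from smooth dependence of $y(x, x')$ on $F$ and continuity of the line integral defining $H$.

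The main obstacle is the identification of area-preservation of $F$ with the integrability of the one-form $\omega$ on $\RR^2$: this is the essential symplectic-geometric input that guarantees the inverse construction actually produces a generating function rather than merely an unrelated pair of functions $(y, y')$. Everything else---unique solvability, the bookkeeping of $(2\pi\ZZ)^2$-periodicity, and the continuity of the implicit solutions---is a routine application of the implicit function theorem combined with the monotonicity and boundedness built into the definitions of $\cH$ and $\cD$.
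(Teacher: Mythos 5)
Your proof is correct and follows essentially the same route as the paper's: you solve $y = \del_1 H(x,x')$ for $x'$ to define $\Phi(H)$, and you construct the inverse of $(\Phi,\ev_0)$ by integrating the closed one-form $y\,dx - y'\,dx'$ (the paper's $g_1\,dx + g_2\,dx'$), with area-preservation of $F$ supplying closedness and simple connectedness of $\RR^2$ supplying exactness. The only cosmetic difference is that the paper factors $\Phi(H)$ as $\psi_H \circ \phi_H^{-1}$ through two explicit auxiliary diffeomorphisms rather than invoking the implicit function theorem directly as you do.
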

\begin{proof}
For $H \in \cH$, we define two maps $\phi_H,\psi_H:\RR^2 \ra \RR^2$ by
\begin{align*}
 \phi_H(x,x') & = \left(x,\del_1 H(x,x')\right)\\
 \psi_H(x,x') & = \left(x',-\del_2 H(x,x')\right).
\end{align*}
Put $g_x(x')=\del_1 H(x,x')$.
Since $dg_x/dx'=\del_2 \del_1 H>0$
 and $g_x(x'+m)=g_x(x')+m$ for any $x' \in \RR$ and $m \in 2\pi\ZZ$,
 the map $g_x(x')=\del_1H(x,x')$ is a diffeomorphism of $\RR$
 for any $x \in \RR$.
Hence, $\phi_H$ is a diffeomorphism of $\RR^2$.
Similarly, so is $\psi_H$.
We define a diffeomorphism $\Phi(H)$ of $\RR^2$ by
 $\Phi(H)=\psi_H \circ \phi_H^{-1}$.
Since the maps $\phi_H - \Id$ and $\psi_H -\Theta$ are $(2\pi\ZZ)^2$-periodic,
 $\Phi(H)-\Theta$ is also $(2\pi\ZZ)^2$-periodic.
By direct computation, we can see that
 $\Phi(H)$ is area-preserving and satisfies the twist condition
 (\ref{eqn:twist}).
Therefore, $\Phi(H)$ is an element of $\cD$.

For $F \in \cD$, we define two maps
 $\hat{\phi}_F, \hat{\psi}_F: \RR^2 \ra \RR^2$ by
\begin{align*}
\hat{\phi}_F(x,y) & = (x,p_1 \circ F(x,y))\\
\hat{\psi}_F(x',y') & = (p_1 \circ F^{-1}(x',y'),x').
\end{align*}
By the twist condition (\ref{eqn:twist}), $\hat{\phi}_F$ is a diffeomorphism.
For $(x',y')=F(x,y)$,
 we have $\hat{\phi}_F(x,y)=\hat{\psi}_F \circ F(x,y)=(x,x')$.
Hence, $\hat{\psi}_F$ is a diffeomorphism and
 $F=\hat{\psi}_F^{-1} \circ \hat{\phi}_F$.
Take $g_1,g_2 \in C^\infty(\RR^2)$ such that
\begin{align*}
 \hat{\phi}_F^{-1}(x,x') & = (x, g_1(x,x'))\\
 \hat{\psi}_F^{-1}(x,x') & = (x',-g_2(x,x')).
\end{align*}
By a direct calculation, we obtain that
\begin{gather*}
\del_1 g_2-\del_2 g_1
 =\log \det DF_{\hat{\phi}_F^{-1}(x,x')}=0,\\
 (\del_2 g_1)^{-1}
 =\del_2(p_1 \circ F)(\hat{\phi}_F^{-1}(x,x'))>0.
\end{gather*}
The former implies that the one-form
 $g_1 dx+ g_2 dx'$ is closed.
By Poincar\'e's lemma,
 there exists a unique $C^\infty$-function $H_F$ such that
 $H_F(0,0)=0$ and $dH_F = g_1 dx + g_2 dx'$.
Since $F-\Theta$ is $(2\pi\ZZ)^2$-periodic,
 the maps $\hat{\phi}_F-\Id$ and $\hat{\psi}_F-\Theta^{-1}$
 are $(2\pi\ZZ)^2$-periodic.
This implies that $\hat{\phi}_F^{-1}-\mathrm{Id}$ and $\hat{\psi}_F^{-1}-\Theta$
 are $(2\pi\ZZ)^2$-periodic, and hence, so are $g_1-p_2$ and $g_2-p_1$.
Therefore, $H_F$ is an function in $\cH$.
We define a map $\Psi:\cD \times \RR \ra \cH$ by $\Psi(F,c)=H_F+c$.
Since
\begin{equation*}
F=\hat{\psi}_F^{-1} \circ \hat{\phi}_F=\psi_{H_F} \circ \phi_{H_F}^{-1} 
 =\Phi(H_F),
\end{equation*}
 $\Psi$ is the inverse of $(\Phi, ev_0)$.
The continuity of $\Phi$ and $\Psi$ follows from the constrictions.
\end{proof}

\begin{corollary}
\label{cor:Crit} 
For $H \in \cH$ and $(x_0,\dots,x_n) \in \RR^{n+1}$,
 the following two conditions are equivalent.

\noindent 
(i)  $(x_0,\dots,x_n)$ is a critical point of $H_n:\mathbb{R}^{n+1}\to \mathbb{R}$ where 
$H_n(x_0, \dots,x_n) := \sum_{i=0}^{n-1} H(x_i,x_{i+1})$.

\noindent 
(ii) There exists $(y_0,\dots,y_n) \in \RR^{n+1}$
 such that $y_0=y_n=0$ and
 $(x_{j+1},y_{j+1})=\Phi(H)(x_j,y_j)$ for any $j=0,\dots,n-1$.
\end{corollary}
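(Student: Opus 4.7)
The plan is to unpack the two conditions into explicit systems of equations in terms of the partial derivatives of $H$, and observe that they coincide once one identifies the auxiliary variables $y_j$ with the appropriate ``conjugate momenta''.

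First, I would compute $\nabla H_n$ directly from the definition $H_n(x_0,\dots,x_n) = \sum_{i=0}^{n-1} H(x_i,x_{i+1})$. The partial derivatives give
\begin{equation*}
\del_{x_0} H_n = \del_1 H(x_0,x_1), \quad \del_{x_n} H_n = \del_2 H(x_{n-1},x_n),
\end{equation*}
and for $0<i<n$
\begin{equation*}
\del_{x_i} H_n = \del_2 H(x_{i-1},x_i) + \del_1 H(x_i,x_{i+1}).
\end{equation*}
So condition (i) is equivalent to the system: $\del_1 H(x_0,x_1)=0$, $\del_2 H(x_{n-1},x_n)=0$, and $\del_2 H(x_{i-1},x_i)+\del_1 H(x_i,x_{i+1})=0$ for $0<i<n$.

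Next I would rewrite condition (ii) using Proposition \ref{prop:generating}(i): the relation $(x_{j+1},y_{j+1})=\Phi(H)(x_j,y_j)$ is equivalent to $y_j=\del_1 H(x_j,x_{j+1})$ and $y_{j+1}=-\del_2 H(x_j,x_{j+1})$.

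For the implication (i) $\Rightarrow$ (ii), given a critical point I would define $y_0:=\del_1 H(x_0,x_1)$ and $y_{j+1}:=-\del_2 H(x_j,x_{j+1})$ for $j=0,\dots,n-1$. Then $y_0=0$ by the first critical equation, $y_n=0$ by the last one, and the interior critical equations give $y_{j}=-\del_2 H(x_{j-1},x_j)=\del_1 H(x_j,x_{j+1})$ for $0<j<n$, so the pair $(y_j,y_{j+1})$ agrees with the formulas from Proposition \ref{prop:generating}(i) and the orbit relation holds. Conversely, given the sequence $(y_0,\dots,y_n)$ as in (ii), the same formulas together with $y_0=y_n=0$ immediately reproduce the critical equations. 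The only subtle point, which I should spell out, is the matching at the interior indices: the equality of the two expressions for $y_j$ coming from consecutive applications of $\Phi(H)$ is precisely the critical equation at $x_j$. There is no real obstacle here — it is a routine bookkeeping argument once Proposition \ref{prop:generating}(i) is in hand.
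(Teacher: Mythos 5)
Your proof is correct and follows essentially the same route as the paper: both reduce condition (i) to the explicit critical-point equations $\del_1 H(x_0,x_1)=0$, $\del_2 H(x_{n-1},x_n)=0$, $\del_2 H(x_{j-1},x_j)+\del_1 H(x_j,x_{j+1})=0$ for $0<j<n$, and then invoke Proposition \ref{prop:generating}(i) to identify these with the orbit condition in (ii). The paper simply states this reduction and concludes ``hence the corollary follows,'' whereas you spell out the bookkeeping with the $y_j$'s explicitly; the content is the same.
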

\begin{proof}
For $H \in C^\infty(\RR^2)$, 
 a point $(x_0,\dots,x_n) \in \RR^{n+1}$ is a critical point of $H_n$
 if and only if 
 $\del_1 H(x_0,x_1)=\del_2 H(x_{n-1},x_n)=0$
 and
 $\del_1 H(x_j,x_{j+1}) = - \del_2 H(x_{j-1},x_j)$
 for any $j=1,\dots,n-1$.
Hence, the corollary follows from Proposition \ref{prop:generating}.
\end{proof}

Hence, the counting of critical point of $H_n$ is reduced to
 the counting of points in
 $\Phi(H)^{-n}(\RR \times \{0\}) \cap (\RR \times \{0\})$.

\subsection{Abundance of recurrence of intervals}
\label{sec:growth}
Let $\TT^2=(\RR/2\pi\ZZ)^2$ be the two-dimensional torus.
Set $M=\RR^2$ or $\TT^2$.
By $\Diff_\omega(M)$, we denote the set
 of area-preserving diffeomorphisms of $M$
 endowed with the $C^\infty$ compact-open topology.
By $\Int I$, we denote the interior of an interval $I$.
For embedded intervals $I$ and $J$ in $M$,
 let $I \pitchfork J$ be the set of
 transverse intersections of $\Int I$ and $\Int J$.

Let us recall some definitions and known facts on dynamical systems.
The authors recommend \cite{KH} or \cite{Ro} for reference.
For $f \in \Diff_\omega(M)$,
 a fixed point $p$ of $f^N$ is called {\it hyperbolic}
 if no eigenvalues of $Df^N_p$ is of absolute value one.
Remark that one of the eigenvalues is of absolute value greater than one
 and the other is less than one since $f$ is area-preserving.
{\it A continuation} of a hyperbolic fixed point $p$ of $f^N$
 is a continuous map $\hat{p}$
 from a neighborhood $\cU \subset \Diff_\omega(M)$ of $f$ to $M$
 such that $\hat{p}(f)=p$ and $\hat{p}(g)$ is a hyperbolic fixed point
 of $g^N$ for any $g \in \cU$.
It is known that any hyperbolic fixed point admits a continuation.

Let $d$ be the standard distance on $M=\RR^2$ or $\TT^2$. 
For a hyperbolic fixed point $p$ of $f^N$, 
 {\it the stable manifold} $W^s(p;f)$ and
 {\it the unstable manifold} $W^u(p;f)$ are defined by
\begin{align*}
 W^s(p;f)
  & =\left\{q \in M \mid d(f^n(p),f^n(q)) \ra 0\; (n \ra +\infty) \right\},\\
 W^u(p;f)
  & =\left\{q \in M \mid d(f^n(p),f^n(q)) \ra 0\; (n \ra -\infty) \right\}.
\end{align*}
By the stable manifold theorem,
 both $W^s(p;f)$ and $W^u(p;f)$ are $C^\infty$ injectively immersed curves.
For $L>0$, let $W^s_L(p;f)$ and $W^u_L(p;f)$ be
 the compact subintervals of $W^s(p;f)$ and $W^u(p;f)$
 centered at $p$ whose length is $2L$.
The set $W^s_L(p;f)$ satisfies
 $f^N(W^s_L(p;f)) \subset W^s_L(p;f)$
 and $W^s(p;f)=\bigcup_{k \geq 0}f^{-kN}W^s_L(p;f)$.
The set $W^u_L(p;f)$ also have similar properties.
For a continuation $\hat{p}:\cU \ra M$ of $p$,
 it is known that $W^s_L(\hat{p}(f);f)$ and $W^u_L(\hat{p}(f);f)$
 depends continuously on $f$ as $C^\infty$ embedded intervals.

We say that a hyperbolic fixed point $p$ of $f^N$
 {\it exhibits homoclinic tangency}
 at a point $q \in W^s(p;f) \cap W^u(p;f)$
 if $W^s(p;f)$ and $W^u(p;f)$ are tangent at $q$.
We also say that the homoclinic tangency at $q$ is $\infty$-flat if
 the $\infty$-jets of $W^s(p;f)$ and $W^u(p;f)$ at $q$ coincide.

The aim of this section is to show the following.
\begin{proposition}
\label{prop:abundance}
Let $J$ be an interval in $\TT^2$,
 $f_0$ a diffeomorphism in $\Diff_\omega(\TT^2)$,
 and $p_0$ a hyperbolic fixed point of $f_0^N$ for some $N \geq 1$.
Suppose that $p_0$ exhibits homoclinic tangency
 and $W^\sigma(p_0;f_0) \pitchfork J \neq \emptyset$ for each $\sigma=s,u$.
Then, there exists an open subset $\cU_*$ of $\Diff_\omega(\TT^2)$
 such that $f_0 \in \cl{\cU_*}$ and the set
\begin{equation*}
\cU_n=\bigcup_{m \geq n} \left\{f \in \cU_*
 \mid \#\left[f^m(J) \pitchfork J \right] \geq a_m\right\}
\end{equation*}
 is an open dense subset of $\cU_*$
 for any given sequence $(a_m)_{m \geq 1}$ of positive integers
 and any $n \geq 1$.
\end{proposition}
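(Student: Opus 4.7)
Plan of the proof: I would take $\mathcal{U}_*$ to be a small $C^\infty$-neighborhood of a carefully constructed perturbation $f_1$ of $f_0$, where $f_1$ retains the hyperbolic continuation $p_1$ of $p_0$, retains transverse intersections of $W^s(p_1;f_1)$ and $W^u(p_1;f_1)$ with $J$, and in addition has an $\infty$-flat homoclinic tangency at some point $t$ near the original tangency. Such an $f_1$ can be produced by a $C^\infty$-small area-preserving (Hamiltonian) flattening of the original tangency, and a smooth family interpolating $f_0$ to $f_1$ places $f_0 \in \cl{\cU_*}$. Since transverse intersections are stable under $C^\infty$-small perturbations, each set $\mathcal{W}_m := \{f \in \mathcal{U}_* : \#[f^m(J) \pitchfork J] \geq a_m\}$ is automatically open, hence $\mathcal{U}_n = \bigcup_{m \geq n} \mathcal{W}_m$ is open. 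The real content lies in density.

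For density, fix a nonempty open $\mathcal{V} \subset \mathcal{U}_*$ and $n \geq 1$. I would first find $f \in \mathcal{V}$ still exhibiting an $\infty$-flat tangency $t$ of the hyperbolic continuation $p = \hat p_0(f)$, with transverse intersections $q^s \in W^s(p;f) \pitchfork J$ and $q^u \in W^u(p;f) \pitchfork J$ (such $f$ form a dense subset of $\mathcal{U}_*$ by the flattening construction above). The inclination lemma, applied at $q^s$, gives that for any compact arc $\gamma^u \subset W^u(p;f)$ and any $\varepsilon>0$ there exists $k_1$ such that $f^{k_1}(J)$ contains an arc $C^1$-$\varepsilon$-close to $\gamma^u$; symmetrically, applied at $q^u$, for any compact arc $\gamma^s \subset W^s(p;f)$ there is $k_2$ with $f^{-k_2}(J)$ containing an arc $C^1$-close to $\gamma^s$. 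Consequently, any transverse intersection pattern of arcs of $W^u$ with arcs of $W^s$ near $t$ is $C^1$-replicated as transverse intersections of $f^{k_1}(J)$ with $f^{-k_2}(J)$, i.e.\ of $f^{k_1+k_2}(J)$ with $J$.

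The perturbation step is the heart of the argument. Using the $\infty$-flatness at $t$, I would construct a Hamiltonian perturbation supported in an arbitrarily small neighborhood $B$ of $t$, with arbitrarily small $C^\infty$-norm, yet producing at least $a_m$ new transverse intersections of $W^u(\hat p_0(g);g)$ with $W^s(\hat p_0(g);g)$ inside $B$. This is possible precisely because at a flat tangency the two manifolds agree to infinite order: one can add a high-frequency low-amplitude wiggle to one of them (realized by a compactly supported Hamiltonian) that crosses the other as many times as desired while remaining $C^\infty$-small. Choosing the perturbation small enough to keep $g \in \mathcal{V}$ and then choosing $k_1, k_2$ large enough (first so that $m = k_1+k_2 \geq n$, and second so that the inclination-lemma approximation is tight enough to preserve all $a_m$ transverse crossings) produces the desired $g$ with $\#[g^m(J) \pitchfork J] \geq a_m$.

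The main obstacle is the interlocking of the three quantitative choices: the perturbation scale (small enough to lie in $\mathcal{V}$), the number of wiggles (large enough to exceed $a_m$), and the inclination-lemma iterate lengths $k_1,k_2$ (large enough to give a uniform $C^1$-approximation supporting all $a_m$ crossings). The unlimited flexibility granted by the infinite-order tangency — any desired number of transverse crossings of $W^u$ with $W^s$ can be produced with arbitrarily small $C^\infty$-cost — is exactly what enables these three requirements to be satisfied simultaneously, independently of the growth sequence $(a_n)$. A secondary technical point is that perturbations must be realized as area-preserving diffeomorphisms, which is handled by working with Hamiltonian generators supported in $B$ throughout.
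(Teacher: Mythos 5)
Your plan and the paper's are in the same family---both rely on the Inclination Lemma to push iterates of $J$ along the invariant manifolds, the flattening of tangencies to $\infty$-flat tangencies, and local area-preserving perturbations near the tangency to manufacture transverse crossings---but there is a serious gap at the step where you claim density of the $\infty$-flat tangency locus in $\cU_*$. You write that ``such $f$ form a dense subset of $\mathcal{U}_*$ by the flattening construction above,'' but this does not follow from flattening alone. The Kupka--Smale theorem gives the opposite expectation: for a residual set of area-preserving diffeomorphisms all homoclinic intersections are transverse, so the set of $f$ exhibiting \emph{any} tangency is meager, and naively one would expect a small neighborhood of $f_0$ to contain only finitely many (or no) tangencies for a dense subset of its elements. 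The crucial missing ingredient is Duarte's theorem (the conservative analogue of the Newhouse phenomenon): it asserts the existence of a specially constructed open set $\cU$ accumulating on $f_0$ in which tangencies of the continuation $\hat p(f)$ are \emph{dense}. Only inside such a Newhouse domain can you then invoke Gonchenko--Turaev--Shilnikov to upgrade to $\infty$-flat tangencies on a dense subset. Your argument simply assumes this density; without Duarte's theorem the claim that ``such $f$ form a dense subset'' is unjustified, and the whole density part of the proof collapses. Choosing $\cU_*$ correctly is exactly what Duarte provides; it cannot be taken to be an arbitrary small neighborhood of $f_1$.

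A second, smaller issue is in your perturbation step: you perturb by a Hamiltonian supported in a small ball $B$ around the tangency $t$ to create $a_m$ crossings, and simultaneously use the Inclination Lemma on the \emph{unperturbed} $f$ to bring iterates of $J$ close to $W^u$ and $W^s$ near $t$. But the perturbation changes the dynamics, and one must check that the orbit segments of $J$ involved in the Inclination Lemma approximation do not pass through the support $B$ at the wrong times---otherwise the approximation no longer applies to the perturbed map $g$. The paper's Lemma 3.7 handles this by explicitly choosing subintervals $I^\pm_k$ of $J$ whose iterates $f^j(I^\pm_k)$ avoid the support $U$ of the perturbation for all $j$ except the final one, ensuring $g^{n_-}(I^-_{k_*}) = f^{n_-}(I^-_{k_*})$. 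Your sketch omits this bookkeeping, and without it the claim that the perturbed map still maps $J$ near $t$ in the required controlled way is not established. Also, the paper aims for infinitely many transverse intersections (which then trivially dominates any $a_m$), rather than exactly $a_m$; this is a cleaner formulation because a single dense construction then works for all sequences $(a_m)$ simultaneously.
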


The first ingredient of the proof is
 {\it The Inclination Lemma} (or {\it The $\lambda$-lemma}).
See {\it e.g.} {\cite[Theorem V.11.1]{Ro}} for the proof.
\begin{theorem}
\label{thm:lambda}
Let $p$ be a hyperbolic fixed point of $f \in \Diff_\omega(\TT^2)$,
 $I$ and $J$ embedded compact intervals in $\TT^2$,
 and $K$ a closed subset of $\TT^2$ such that
 $p \in I \subset W^u(p;f)$,
 $J \pitchfork W^s(p;f)$ contains a point $z$,
 and $K \cap [I \cup \{f^n(z); n \geq 0\}]= \emptyset$.
Then, there exists a sequence $(J_k)_{k\geq 1}$ of subintervals of $J$
 such that

 \noindent (i) $f^n(J_k) \cap K= \emptyset$ for any $n=0,\dots,k$.

 \noindent (ii) $f^{k}(J_k)$ converges to $I$
 as a $C^\infty$ embedded interval as $k \ra \infty$.

\end{theorem}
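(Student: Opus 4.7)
The plan is to reproduce the classical proof of the Inclination Lemma, as found in the cited reference, via a local analysis near $p$ combined with the graph transform. The essential phenomenon to capture is that forward iteration of an arc transverse to $W^s(p;f)$ exponentially aligns it with the local unstable manifold in the $C^\infty$ topology, after which invariance is used to upgrade the limit from $W^u_{loc}(p)$ to the prescribed $I$.

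Concretely, I would first pass to local coordinates $(x, y)$ in a neighborhood $U$ of $p$ in which $W^s_{loc}(p) \subset \{x = 0\}$ and $W^u_{loc}(p) \subset \{y = 0\}$. Since $f$ is an area-preserving diffeomorphism of a surface, $Df_p$ has eigenvalues $\lambda, \lambda^{-1}$ with $|\lambda| > 1$, yielding a uniformly hyperbolic splitting on $U$. The transverse intersection $z \in J \pitchfork W^s(p; f)$ ensures $f^n(z) \to p$ along $W^s$, so for $n \geq n_0$ sufficiently large, $f^{n_0}$ maps a small subinterval $J' \subset J$ containing $z$ into $U$ as a short arc transverse to $W^s_{loc}(p)$. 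I would then represent each further iterate $f^{n_0 + k}(J')$ as a graph $y = \sigma_k(x)$ over a subinterval of the unstable axis. The graph transform induced by $f$ stretches the domain of $\sigma_k$ by a factor $\approx \lambda$ and contracts $\sigma_k$ pointwise by $\approx \lambda^{-1}$; an inductive estimate shows that each higher derivative $\sigma_k^{(\ell)}$ decays geometrically at a rate depending on $\ell$ and $\lambda$. Hence $\sigma_k \to 0$ in $C^\infty$ on any compact subdomain, giving $f^{n_0 + k}(J') \to W^u_{loc}(p)$ in $C^\infty$. To upgrade the limit to the prescribed compact $I \subset W^u(p; f)$ containing $p$, I would use the invariance $W^u(p; f) = \bigcup_{j \geq 0} f^j(W^u_{loc}(p))$ together with compactness of $I$ to pick a fixed $j_0$ with $f^{j_0}(W^u_{loc}(p)) \supset I$, apply $f^{j_0}$ to the $C^\infty$ limit, and extract subintervals $J_k \subset J'$ so that $f^k(J_k)$ lands in the desired region, establishing condition (ii) after relabeling indices.

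For condition (i), I would invoke the hypothesis $K \cap [I \cup \{f^n(z) : n \geq 0\}] = \emptyset$ together with compactness. The set $\{f^n(z) : n \geq 0\} \cup \{p\}$ is compact, and so is $I$; both are disjoint from the closed $K$, hence separated by a positive distance. Since each iterate $f^n(J_k)$ for $0 \leq n \leq k$ lies in an arbitrarily small neighborhood of the orbit segment $\{f^j(z) : 0 \leq j \leq n_0\}$ on the initial segment and of iterates of $W^u_{loc}(p)$ on the terminal segment once $J_k$ is taken short enough, the non-intersection with $K$ is preserved. The principal technical obstacle is precisely the $C^\infty$ graph transform estimate, which requires an inductive control of all higher derivatives $\sigma_k^{(\ell)}$ under repeated composition with $f$. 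Verifying that the geometric decay rates survive the nonlinear remainder in the local form of $f$ — so that the convergence genuinely takes place in every $C^\ell$-norm simultaneously — is the substantive content of the classical proof and is what is invoked through the cited reference.
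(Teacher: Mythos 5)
The paper does not prove Theorem~3.3; it states the Inclination Lemma as a classical fact and refers to Robinson's book for the proof. There is therefore no paper proof to compare against, and your sketch (local linearization near $p$ and a $C^\infty$ graph-transform estimate, with the iterate count pushed up by a fixed $j_0$ to cover the prescribed $I \subset W^u(p;f)$) is indeed the standard route taken in the cited reference. Deferring the quantitative $C^\ell$ decay to that reference is a reasonable level of detail for a known result.

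Where your write-up is too thin is the verification of condition~(i). You assert that every $f^n(J_k)$, $0 \leq n \leq k$, stays near either the orbit segment of $z$ or ``iterates of $W^u_{loc}(p)$,'' but this glosses over two distinct points. First, once $f^{n_0}(J_k)$ has entered the linearizing chart $U$, the arcs $f^n(J_k)$ for intermediate $n$ are simply somewhere inside $U$ (they hug $W^s_{loc}$ before they hug $W^u_{loc}$), so you need $U \cap K = \emptyset$; this is available because $K$ is closed and disjoint from $I \ni p$, but it must be invoked when $U$ is chosen. Second, and more substantively, the hypothesis only gives that $K$ misses $I$, not that $K$ misses $W^u(p;f)$, so ``near iterates of $W^u_{loc}(p)$'' is not good enough: you must arrange the construction so that the terminal arcs $f^n(J_k)$ for $n$ near $k$ stay close to $I$ itself. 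In your upgrade step this amounts to requiring $f^j(W^u_{loc}) \subset I$ for all $0 \leq j \leq j_0$. That containment is automatic when $I$ is a centered arc $W^u_L(p;f)$ (which is what the paper uses in its application), but it is not automatic for an arbitrary compact $I$ containing $p$ — in particular when the unstable eigenvalue is negative and $p$ is an endpoint of $I$, the backward iterates $f^{-j}(I)$ leave $I$ on alternate steps, so the ``non-intersection with $K$ is preserved'' claim as stated does not follow. Tightening the statement (or the construction) so that each $f^n(J_k)$ stays within a small neighborhood of $\{f^j(z) : 0 \leq j \leq n_0\} \cup I$ would close this gap.
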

Take a neighborhood $\cU_0 \subset \Diff_\omega(\TT^2)$ of $f_0$
 and a continuation $\hat{p}:\cU_0 \ra \TT^2$ of $p_0$.
Applying The Inclination Lemma, we give a criterion to
 approximation by a diffeomorphism $g$ such that
 $g^m(J) \pitchfork J$ contains infinitely many points.
\begin{lemma}
\label{lemma:intersection}
Let $J^-$ and $J^+$ be compact intervals in $\TT^2$,
 $f$ a diffeomorphism in $\Diff_\omega(\TT^2)$, and
 $p$ a hyperbolic fixed point of $f^N$ with some $N \geq 1$.
Suppose that there exist $L>0$
 and $q \in [W^s(p;f) \setminus W^s_L(p;f)]
 \cap [W^u(p;f) \setminus W^u_L(p;f)]$ such that
 $J^- \pitchfork W^s_L(p;f) \neq \emptyset$,
 $J^+ \pitchfork W^u_L(p;f) \neq \emptyset$, and
 $p$ exhibits $\infty$-flat homoclinic tangency at $q$.
Then, for any give neighborhood $\cU$ of $f$ in $\Diff_\omega(\TT^2)$
 and any $n_0 \geq 1$,
 there exists a diffeomorphism $g \in \cU$ and $n_* \geq n_0$ such that
\begin{equation*}
 \#\left[g^{n_*}(J^-) \pitchfork J^+\right]=\infty.
\end{equation*}
\end{lemma}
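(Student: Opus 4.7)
The plan is to use the $\infty$-flat homoclinic tangency at $q$ to construct, by an arbitrarily $C^\infty$-small perturbation $g$ of $f$, a diffeomorphism for which the perturbed unstable manifold $W^u(\hat p(g); g)$ and stable manifold $W^s(\hat p(g); g)$ share a nontrivial open arc $\alpha$ near $q$. Once such a $g$ is obtained, two applications of the Inclination Lemma (Theorem~\ref{thm:lambda}) will propagate this shared arc so that sufficiently late forward iterates of $J^-$ cross $J^+$ in as many points as desired.

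To build $g$, I would choose $C^\infty$ local coordinates around $q$ in which $W^s(p;f)$ is locally the $x$-axis. The $\infty$-flat tangency means the local arc of $W^u(p;f)$ through $q$ is the graph $y=\phi(x)$ of a $C^\infty$ function $\phi$ all of whose derivatives vanish at $0$. Choose a smooth cutoff $\chi$ equal to $1$ on $(-\delta',\delta')$ and supported in $(-\delta,\delta)$; then $\phi_* := (1-\chi)\phi$ has arbitrarily small $C^\infty$ norm (owing to the infinite-order vanishing of $\phi$), agrees with $\phi$ outside $(-\delta,\delta)$, and vanishes on $(-\delta',\delta')$. I realize this graph-level modification as a $C^\infty$-small perturbation of $f$ supported in a small neighborhood $V$ of $q$ chosen disjoint from $W^s_L(p;f) \cup W^u_L(p;f) \cup J^- \cup J^+$ and from finitely many relevant iterates of the transverse intersection points $z^\sigma \in J^\sigma \pitchfork W^\sigma_L$ (for $\sigma = s,u$). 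By continuity of hyperbolic continuations and local invariant manifolds, the resulting $g$ lies in $\cU$, still has $J^\sigma \pitchfork W^\sigma_L(\hat p(g); g) \neq \emptyset$, and contains the open arc $\alpha := \{(x,0): |x|<\delta'\} \subset W^s(\hat p(g); g) \cap W^u(\hat p(g); g)$.

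Next I would apply Theorem~\ref{thm:lambda} in both directions. With $I^u$ a compact arc in $W^u(\hat p(g); g)$ from $\hat p(g)$ that extends past $q$ and contains $\alpha$, and $J = J^-$ meeting $W^s(\hat p(g); g)$ transversely at $z^-_g$, the Inclination Lemma produces sub-intervals $J^-_k \subset J^-$ with $g^k(J^-_k) \to I^u$ in $C^\infty$. Applying Theorem~\ref{thm:lambda} to $g^{-1}$ (whose stable and unstable manifolds of $\hat p(g)$ are swapped) with a compact arc $I^s \subset W^s(\hat p(g); g)$ from $\hat p(g)$ containing $\alpha$, and $J = J^+$ meeting $W^u(\hat p(g); g)$ transversely at $z^+_g$, yields sub-intervals $J^+_m \subset J^+$ with $g^{-m}(J^+_m) \to I^s$ in $C^\infty$. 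Setting $n_* = k+m$, the iterate $g^{n_*}(J^-_k) = g^m(g^k(J^-_k))$ is $C^\infty$-close to $g^m(I^u) \supset g^m(\alpha)$, while $J^+_m \subset J^+$ is $C^\infty$-close to $g^m(I^s) \supset g^m(\alpha)$; both accumulate on the same arc $g^m(\alpha) \subset W^u(\hat p(g); g) \cap W^s(\hat p(g); g)$.

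The main obstacle is converting this two-sided accumulation on $g^m(\alpha)$ into arbitrarily many transverse intersections of $g^{n_*}(J^-)$ with $J^+$. The mechanism I have in mind is that $g^{n_*}(J^-_k)$ approaches $g^m(\alpha)$ from the ``unstable side'' of the invariant foliations (its tangent direction being dominated by the unstable direction of $Dg$ along the arc) while $J^+_m$ approaches it from the ``stable side''; the two accumulating curves are therefore nearly transverse to each other along $g^m(\alpha)$, and by refining the cutoff $\chi$ in Step~1 so that $\phi_*$ exhibits a prescribed number of additional transverse sign changes near the boundary of $\{\chi>0\}$, these induce correspondingly many transverse crossings of $g^{n_*}(J^-)$ with $J^+$. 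A diagonal/limiting argument, taking $n_*$ large and the perturbation more oscillatory, then gives $\#[g^{n_*}(J^-) \pitchfork J^+] = \infty$ for some $n_* \geq n_0$, which is the conclusion of the lemma.
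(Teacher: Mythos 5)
Your proof has a genuine gap at exactly the place you flag as ``the main obstacle,'' and the mechanism you suggest for closing it does not work. After you perturb $f$ so that $W^s(\hat p(g);g)$ and $W^u(\hat p(g);g)$ coincide on an open arc $\alpha$ through $q$, the Inclination Lemma gives you that $g^{n_*}(J^-_k)$ and $J^+_m$ both converge in $C^\infty$ to arcs that contain $g^m(\alpha)$. But two curves that are each $C^\infty$-close to the \emph{same} arc need not intersect at all --- one can be shifted slightly to one side of $g^m(\alpha)$ and the other to the opposite side, or both to the same side. Your claim that the two families approach $g^m(\alpha)$ from ``opposite sides'' and are ``nearly transverse to each other'' is false: along the coincidence arc $\alpha$ the stable and unstable manifolds are not transverse, they are identical, so both accumulating families are nearly tangent to (in fact nearly contained in) the same curve. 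Nothing in the Inclination Lemma controls which side of $g^m(\alpha)$ the accumulating sub-arcs land on. Your fallback --- build the cutoff $\chi$ so $\phi_*$ has a prescribed finite number of sign changes, then take a diagonal limit --- also does not yield the conclusion: each fixed $g$ in that scheme has only finitely many transverse crossings near $q$, so you would obtain a \emph{sequence} of diffeomorphisms with more and more intersections, not a single $g \in \cU$ and a single $n_*$ with $\#[g^{n_*}(J^-)\pitchfork J^+]=\infty$ as the lemma requires.

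The fix (and what the paper does) is to reverse the order of operations. Apply the Inclination Lemma \emph{to $f$ itself} to produce compact sub-arcs $I^-_{k}\subset J^-$, $I^+_{k}\subset J^+$ whose iterates $f^{kN+m_-}(I^-_{k})$ and $f^{-(kN+m_+)}(I^+_{k})$ land in a fixed small neighborhood $U$ of $q$, converging in $C^\infty$ to arcs $I^u\subset W^u$ and $I^s\subset W^s$ with the same $\infty$-jet at $q$, while all the intermediate iterates are arranged to miss $U$. Then perturb by $\phi$ supported in $U$: since the two pushed-forward sub-arcs are concrete compact curves in $U$ that are $C^\infty$-close to each other (by the flat tangency), one can first make them overlap on an interval and then superimpose an infinitely oscillating $C^\infty$-small modification (of the $e^{-1/x^2}\sin(1/x)$ type) so that they meet transversely in infinitely many points. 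Because the intermediate iterates avoid $U$, setting $g=f\circ\phi^{-1}$ gives $g^{n_-}(I^-_{k_*})=f^{n_-}(I^-_{k_*})$ and $g^{-n_+}(I^+_{k_*})=\phi(f^{-n_+}(I^+_{k_*}))$, whence $g^{n_++n_-}(I^-_{k_*})\pitchfork I^+_{k_*}$ is infinite for a single $g\in\cU$. The essential point your proposal misses is that the perturbation must act on the \emph{finite iterates of the sub-arcs themselves}, not on the invariant manifolds, so that the ``infinitely many crossings'' is an honest property of one diffeomorphism rather than an artifact of a limit.
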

\begin{proof}
We may assume that $N$ is the minimal period of $p$,
 {\it i.e.}, the minimal positive integer satisfying $f^N(p)=p$.
Let $m_-$ and $m_+$ be the minimal positive integers
 such that  $f^{-m_-}(q) \in W^u_L(p;f)$ and $f^{m_+}(q) \in W^s_L(p;f)$.
Since $q \in W^s(p;f) \cap W^u(p;f)$ and
 $W^\sigma(p;f) \cap W^\sigma(f^j(p);f)=\emptyset$
 for $\sigma=s,u$ if $j \not\equiv 0 \pmod N$,
 we have $m_\pm \geq N$.
Take $L'>L$ such that
 $f^j(q) \not\in W^u_{L'}(p;f) \cup W^s_{L'}(p;f)$
 for any $j=-m_- +1,\dots,m_+ -1$.
We also take an open neighborhood $U$ of $q$ such that
 $\cl{f^i(U)} \cap [W^u_{L'}(p;f) \cup W^s_{L'}(p;f)]= \emptyset$
 and $f^i(U) \cap f^j(U) =\emptyset$
 for any $i,j=-m_-+1,\dots,m_+-1$ with $i \neq j$.

By The Inclination Lemma,
 there exists a sequence $(J^-_k)_{k \geq 1}$ of subintervals
 of $J^-$ such that $f^{iN}(J^-_k) \cap \bigcup_{j=-m_-+1}^{m_+-1}f^j(U)
=\emptyset$
 for any $i=0,\dots,k$
 and $f^{kN}(J^-_k)$ converges to $W^u_{L'}(p;f)$ as $k \ra \infty$.
Since $m_\pm \geq N$, the former implies that
 $f^j(J^-_k) \cap U=\emptyset$ for any $j=0,\dots,kN+m_--1$.
Take an interval $I^u$ in $f^{m_-}(W^u_{L'}(p;f)) \cap U$
 such that $\Int I^u$ contains $q$.
Then, there exists a sequence $(I^-_k)_{k \geq 1}$ of subintervals of $J^-$
 such that $I^-_k \subset J^-_k$ for any $k \geq 1$
 and $f^{kN+m_-}(I^-_k)$ converges to $I^u$ as $k \ra \infty$.
It satisfies that
 $f^j(I^-_k) \cap U=\emptyset$ for any $j=0,\dots,kN+m_--1$
 and $f^{kN+m_-}(I^-_k)$ converges to $I^u$ as $k \ra \infty$.

Take an interval $I^s$ in $f^{-m_+}(W^s_{L'}(p;f)) \cap U$
 such that $\Int I^s$ contains $q$.
By the same argument as above,
 we can take a sequence $(I^+_k)_{k \geq 1}$ of subintervals $J^+$
 such that $f^{-j}(I^+_k) \cap U=\emptyset$ for any $j=0,\dots, kN+m_+-1$
 and $f^{-(kN+m_+)}(I^+_k)$ converges to $I^s$ as $k \ra \infty$.

Fix an neighborhood $\cU$ of $f$ in $\Diff_\omega(\TT^2)$
 and an integer $n_0 \geq 1$.
Let ${\mathcal V}$ be the set of diffeomorphisms $\phi \in \Diff_\omega(\TT^2)$
 such that $f \circ \phi^{-1} \in \cU$ and $\text{supp}(\phi) \in U$.
Since $I^s$ and $I^u$ are compact intervals in $U$
 and they have the same $\infty$-jets at $q$,
 there exists $k_* \geq n_0$ and $\phi_1 \in \cV$
 such that the set $\phi_1(f^{-(k_*N+m_+)}(I^+_{k_*}))
 \cap f^{k_*N+m_-}(I^-_{k_*})$
 contains an interval.
Put $n_+=k_*N+m_+$ and $n_-=k_*N+m_-$.
Take a small perturbation $\phi_2 \in \cV$ of $\phi_1$ such that
 $\#\left[\phi_2(f^{-n_+}(I^+_{k_*}))
 \pitchfork f^{n_-}(I^-_{k_*})\right] =\infty$.
Put $g=f \circ \phi_2^{-1} \in \cU$.
It is easy to check that
 $g^{n_-}(I^-_{k_*})=f^{n_-}(I^-_{k_*})$
 and $g^{-n_+}(I^+_{k_*})
 =\phi_2 \circ f^{-n_+}(I^+_{k_*})$.
Therefore, we have
$\#\left[g^{n_+ + n_-}(I^-_{k_*})) \pitchfork I^+_{k_*}\right] =\infty$.
Since $k_* \geq n_0$,
 we also have $n_+ + n_- \geq n_0$.
\end{proof}
The other ingredients
 are the following results on homoclinic tangency.
\begin{theorem}
[Duarte \cite{Du}]
\label{thm:Duarte}
Let $f_0$ be a diffeomorphism in $\Diff_\omega(\TT^2)$,
 $p_0$ a hyperbolic fixed point of $f^N$,
 and $\hat{p}:\cU_0 \ra \TT^2$ a continuation of $p_0$
 on an open neighborhood $\cU_0$ of $f_0$.
If $p_0$ exhibits homoclinic tangency,
 then there exists an open set $\cU \subset \cU_0$
 and a dense subset $\cT$ of $\cU$ such that
 $f_0 \in \cl{\cU}$ and $\hat{p}(f)$ exhibits homoclinic tangency
 for any $f \in \cT$.
\end{theorem}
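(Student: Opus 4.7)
The plan is to adapt Newhouse's theory of persistent homoclinic tangencies to the area-preserving setting. The strategy is: construct, inside an open set $\cU \subset \cU_0$ with $f_0 \in \cl{\cU}$, a hyperbolic basic set $\Lambda_f$ (a horseshoe) varying continuously with $f$ and homoclinically related to $\hat{p}(f)$, whose local stable and unstable Cantor sets on a transverse arc have Newhouse thicknesses with product exceeding one; apply the Gap Lemma to get a persistent heteroclinic tangency within $\Lambda_f$; then use the Inclination Lemma (Theorem \ref{thm:lambda}) to upgrade the heteroclinic tangency to a homoclinic tangency of $\hat{p}(f)$ on a dense subset $\cT \subset \cU$.

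Concretely, first I would use the tangency at the point $q_0 \in W^s(p_0;f_0)\cap W^u(p_0;f_0)$ to build the horseshoe. Working in area-preserving local coordinates around $p_0$ in which $f_0^N$ is close to its linearization and iterating past the tangency, an arbitrarily small area-preserving perturbation produces a hyperbolic basic set $\Lambda$ contained in a small neighborhood of the $f_0^N$-orbit of $q_0$ and linked to $\hat{p}$ by transverse heteroclinic orbits. This $\Lambda$ admits a continuation $\Lambda_f$ on a neighborhood $\cU_0'\subset \cU_0$ of $f_0$. The horseshoe is chosen so that for every $f$ in a further open subset $\cU \subset \cU_0'$ with $f_0 \in \cl{\cU}$ the transverse local Cantor sets $K_f^s, K_f^u \subset \RR$ of $\Lambda_f$ satisfy $\tau(K_f^s)\cdot \tau(K_f^u) > 1$. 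Newhouse's Gap Lemma then forces $K_f^s \cap K_f^u \neq \emptyset$, which corresponds to a persistent heteroclinic tangency within $\Lambda_f$ for every $f \in \cU$.

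To promote these heteroclinic tangencies to homoclinic tangencies of $\hat{p}(f)$ on a dense subset of $\cU$, I would apply Theorem \ref{thm:lambda}. Since $\Lambda_f$ is contained in the homoclinic class of $\hat{p}(f)$, the invariant manifolds $W^s(\hat{p}(f);f)$ and $W^u(\hat{p}(f);f)$ accumulate in the $C^\infty$ topology on those of every orbit in $\Lambda_f$. Given any open $\cW \subset \cU$ and any $f \in \cW$, a small area-preserving perturbation supported in a neighborhood of a heteroclinic tangency of $\Lambda_f$ produces $g \in \cW$ for which an iterate of $W^s(\hat{p}(g);g)$ is tangent to $W^u(\hat{p}(g);g)$; the set $\cT$ of such $f$ is then dense in $\cU$ by construction.

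The main obstacle is the thickness estimate. In the dissipative setting, Newhouse's argument freely exploits strong contraction to make the stable Cantor set arbitrarily thick; the area-preserving constraint, however, forces the eigenvalues at $p_0$ to be a reciprocal pair, so stable and unstable directions behave symmetrically. The core technical step is therefore to engineer the horseshoe, using many branches and carefully tuned Markov rectangles in normal-form coordinates adapted to the tangency, so that both thicknesses simultaneously exceed one despite this symmetry; this is the essence of Duarte's contribution and where the bulk of the work lies.
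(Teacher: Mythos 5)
This theorem is cited from Duarte \cite{Du}; the paper provides no proof of it, so there is nothing in the paper to compare your argument against. As to the substance of your sketch: it correctly reproduces the outline of the conservative Newhouse mechanism that Duarte develops --- construct a thick horseshoe linked to $p_0$ near the tangency, invoke the gap lemma to obtain a persistent tangency between the stable and unstable Cantor laminations of the horseshoe on an open set $\cU$ accumulating on $f_0$, and then use the Inclination Lemma together with a small area-preserving perturbation to convert the persistent ``lamination tangency'' into a homoclinic tangency of the continued fixed point $\hat{p}(f)$ on a dense subset $\cT\subset\cU$. You also correctly identify the crux: area preservation forces the eigenvalues at $p_0$ to be a reciprocal pair, so the shortcut available in the dissipative case (strong contraction makes one Cantor set automatically very thick) is gone, and one must engineer a horseshoe whose stable and unstable thicknesses are simultaneously large.

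But that identification is where your argument stops. The thick horseshoe is posited, not constructed; the statement that ``an arbitrarily small area-preserving perturbation produces a hyperbolic basic set $\Lambda$ \ldots\ with $\tau(K_f^s)\cdot\tau(K_f^u)>1$'' is precisely the content of Duarte's theorem, and asserting it is not proving it. The renormalization near the tangency, the choice of Markov partition, the quantitative control of gaps in the conservative normal form, and the verification that the resulting thicknesses persist on an open set are all missing. As it stands, this is a competent road map to Duarte's paper rather than a proof.
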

\begin{theorem}
[Gonchenko-Turaev-Shilnikov \cite{GTS}] 
\label{thm:GTS}
Let $f_0$ be a diffeomorphism in $\Diff_\omega(\TT^2)$
 and $p_0$ a hyperbolic fixed point of $f_0^N$.
If $p_0$ exhibits homoclinic tangency,
 then any neighborhood of $f_0$ contains a diffeomorphism
 $g$ such that $p_0$ is a hyperbolic fixed point of $g^N$
 and it exhibits $\infty$-flat homoclinic tangency.
\end{theorem}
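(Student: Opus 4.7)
The plan is to produce $g$ by a $C^\infty$-small area-preserving perturbation of $f_0$ supported on a small disk far from the $f_0$-orbit of $p_0$, changing the contact between $W^s(p_0;f)$ and $W^u(p_0;f)$ at a chosen orbit point of the tangency from finite order to $\infty$-flat, while leaving $p_0$ and its linearization untouched so that it remains a hyperbolic fixed point of $g^N$.

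First I would localize the problem. Pick a homoclinic tangency point $q_0$ of $p_0$ for $f_0$; replacing $q_0$ by a high $f_0$-iterate if necessary, arrange that $q_0$ lies far from the finite orbit $\{f_0^i(p_0)\}_{i=0}^{N-1}$. Choose a small open disk $V$ around $q_0$, disjoint from that orbit and from a piece of $W^s_{\mathrm{loc}}(p_0;f_0)$, and small enough that $V \cap f_0^j(V) = \emptyset$ for all nonzero $j$ in a large finite range. Introduce area-preserving coordinates $(x,y)$ on $V$ with $q_0=(0,0)$ and $W^s(p_0;f_0) \cap V = \{y=0\}$; the tangency hypothesis then says $W^u(p_0;f_0) \cap V$ is locally the graph of a smooth function $y = u(x)$ with $u(0) = u'(0) = 0$. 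The goal becomes: produce $g$ whose analogous graph $y = \tilde u(x)$ for $W^u(p_0;g)$ satisfies $\tilde u^{(k)}(0) = 0$ for every $k \geq 0$.

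Next I would restrict to perturbations $g = h \circ f_0$, where $h$ is the time-$1$ flow of a compactly supported Hamiltonian on $V$; then $h$ is area-preserving and equals the identity outside $V$, so $\supp(g - f_0) \subset f_0^{-1}(V)$ is disjoint from the $f_0$-orbit of $p_0$. Consequently $g^N(p_0) = p_0$ with unchanged linearization, and $p_0$ is a hyperbolic fixed point of $g^N$. By the disjointness of iterates of $V$, the arc of $W^s(p_0;f_0)$ through $q_0$ is the $f_0^{-k}$-image of a piece of $W^s_{\mathrm{loc}}(p_0;f_0)$ whose forward $f_0$-orbit reaches $V$ only once; the same orbit under $g$ coincides with it, so $W^s(p_0;g) \cap V = \{y=0\}$ is unchanged. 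A symmetric argument on the unstable side yields $W^u(p_0;g) \cap V = h(W^u(p_0;f_0) \cap V)$. The proof thereby reduces to constructing a $C^\infty$-small area-preserving diffeomorphism $h$ of $V$ that carries the graph of $u$ onto a graph that is $\infty$-flat at the origin.

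For this construction I would perform iterative flattening. For any finite $k$, a Hamiltonian $H_k$ of size $O(\|u\|_{C^k})$, supported near $q_0$, can be chosen so that its time-$1$ flow sends the graph $y=u(x)$ to the graph of a function vanishing at $0$ to order $k$; this is a standard area-preserving upgrade of the tangency's order, achieved by absorbing the lowest-order Taylor terms of $u$ into the Hamiltonian. Iterating, I would define $h$ as an infinite composition $\cdots \circ \phi_{H_2}^1 \circ \phi_{H_1}^1$, with $\phi_{H_k}^1$ supported in a disk of radius $\delta_k \to 0$ around $q_0$. The main obstacle is to control the $C^\infty$ size of this composition: the cutoff at radius $\delta_k$ amplifies the $j$-th derivative of $\phi_{H_k}^1$ by $\delta_k^{-j}$, which is only partly tempered by the vanishing of $u$ to order $k-1$ achieved in earlier steps. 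One must couple the sequences $\delta_k$ and $\|H_k\|_{C^j}$ along carefully balanced rates so that the product converges in $C^\infty$ inside any preassigned neighborhood of the identity, while the limiting graph is genuinely $\infty$-flat at $q_0$ and not merely arbitrarily flat at every finite order; this delicate balancing is the technical core inherited from the Gonchenko-Turaev-Shilnikov construction.
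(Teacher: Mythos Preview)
The paper does not prove this theorem; it is quoted from Gonchenko--Turaev--Shilnikov \cite{GTS} and used as a black box in the proof of Proposition~\ref{prop:abundance}. So there is no proof in the paper to compare against, but your proposal deserves scrutiny on its own.

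Your approach has a genuine gap at the very first flattening step. You reduce matters to finding a $C^\infty$-small area-preserving $h$, supported near $q_0$, with $h(\{y=u(x)\})$ having $\infty$-flat contact with $\{y=0\}$; and you have correctly arranged that $W^s(p_0;g)\cap V=\{y=0\}$ while $W^u(p_0;g)\cap V=h(W^u(p_0;f_0)\cap V)$. But then the new unstable graph $\tilde u$ satisfies $\|\tilde u-u\|_{C^2}\leq C\,\|h-\Id\|_{C^2}$, so if the original tangency is quadratic, say $u''(0)=2a_2\neq 0$, then $\tilde u''$ stays uniformly close to $u''$ and at any nearby point with $\tilde u=\tilde u'=0$ one still has $\tilde u''\neq 0$: the contact remains quadratic. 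Concretely, your Hamiltonian $H_1$ absorbing the $a_2x^2$ term must satisfy $\partial_x H_1(x,0)=a_2x^2+O(x^3)$ near $0$, which forces $\|\phi_{H_1}^1-\Id\|_{C^2}\geq 2|a_2|$ \emph{independently of the cutoff radius} $\delta_1$. No balancing of the $\delta_k$ can make the composite $C^2$-small, let alone $C^\infty$-small, so $g=h\circ f_0$ cannot lie in an arbitrary neighborhood of $f_0$.

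The mechanism actually used in \cite{GTS} is essentially different and cannot be short-circuited by a local push of the curve. One does not flatten the given tangency at $q_0$; instead one uses the return dynamics near the homoclinic tangency to create \emph{new} homoclinic tangencies at other orbit points, and shows that by arbitrarily $C^\infty$-small perturbations the order of contact at these secondary tangencies can be raised step by step. The gain in contact order comes from iteration of the map (hyperbolic stretching composed with the fold at the tangency), which amplifies a $C^r$-small change of $f$ into an $O(1)$ change of the relevant jet of $W^u$ at the new tangency. A limiting (Baire-type) argument then yields an $\infty$-flat tangency arbitrarily close to $f_0$.
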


Now, we prove Proposition \ref{prop:abundance}.
Let $J$, $f_0$, and $p_0$ be the ones in
 the assumption of the proposition.
Take $L>0$, an open neighborhood $\cU_0$ of $f_0$
 and a continuation $\hat{p}:\cU_0 \ra \TT^2$ of $p_0$ such that
 $W^s_L(\hat{p}(f);f) \pitchfork J \neq \emptyset$
 and $W^u_L(\hat{p}(f);f) \pitchfork J \neq \emptyset$
 for any $f \in \cU_0$.
By the Kupka-Smale Theorem, $W^s(\hat{p}(f);f)$
 and $W^u(\hat{p}(f);f)$ intersect transversely for generic $f \in \cU_0$.
Hence, there exists an open and dense subset $\cU_1$ of $\cU_0$ such that
 $f \in \cl{\cU_1}$ and
 all intersections of $W^s_L(\hat{p}(f);f)$ and $W^u_L(\hat{p}(f);f)$
 are transverse for any $f \in \cU_1$.
For $n \geq 1$, put
\begin{equation*}
 \cT_n=\{f \in \Diff_\omega(\TT^2)
 \mid \#\left[f^n(J) \pitchfork J \right]=\infty\}.
\end{equation*}
By Theorems \ref{thm:Duarte} and \ref{thm:GTS}
 and Lemma \ref{lemma:intersection},
 we can take an open subset $\cU_*$ of $\cU_1$
 such that $f_0 \in \cl{\cU_*}$ and 
 the set $(\bigcup_{m \geq n}\cT_m) \cap \cU_*$ is dense in $\cU_*$
 for any $n \geq 1$.
This implies that the set
$\cU_n 
 = \bigcup_{m \geq n}\{f \in \cU_* \mid
 \#\left[f^m(J) \pitchfork J \right] \geq a_m \}$
 is an open and dense subset of $\cU_*$ for any sequence $(a_m)_{m \geq 1}$
 and any $n \geq 1$.

\subsection{Proof of Theorem \ref{thm:growth}}
In this subsection, we prove Theorem \ref{thm:growth}.
Recall that $\cD$ is the set of diffeomorphisms $F \in \Diff_\omega(\RR^2)$
 such that $F-\Theta$ is $(2\pi\ZZ)^2$-periodic and
 the twist condition (\ref{eqn:twist}) holds,
 where $\Theta(x,y)=(y,-x)$.
Let $\pi_T:\RR^2 \ra \TT^2=(\RR/2\pi\ZZ)^2$ be the natural projection.
It induces a map $\pi_{T*}:\cD \ra \Diff_\omega(\TT^2)$.
For a diffeomorphism $F \in\cD$, an open subset $U$ of $\RR^2$,
 and $n \geq 1$, we put
\begin{equation*}
 \Lambda^{n}(F,U)=\left[(\RR \times \{0\})
 \pitchfork F^{-n}(\RR \times \{0\})\right]
 \cap \bigcap_{m=0}^n F^{-m}(U).
\end{equation*}

First, we ``lift'' Proposition \ref{prop:abundance} to the set $\cD$.
\begin{proposition}
\label{prop:abundance 2}
Suppose that a diffeomorphism $F_0$ in $\cD$,
 a hyperbolic fixed point $p_0$ of $F_0^N$ ($N\geq 1$),
 open subsets $U_0$ and $U_1$ of $\RR^2$,
 and open subset $\cU_0$ of $\cD$ 
 satisfy the following four conditions:

 \noindent 
(i) $p_0 \in U_0 \subset U_1$,
 $\cl{U_1} \subset (-\pi,\pi)^2$, and $F_0 \in \cU_0$.

\noindent (ii) $p_0$ exhibits homoclinic tangency.

\noindent (iii) $W^\sigma(p_0;F_0) \pitchfork
 [(\RR \times \{0\}) \cap U_0] \neq \emptyset$ for $\sigma=s,u$.

 \noindent (iv) $F^n(U_0) \subset U_1$ for any $F \in \cU_0$ and $n \in \ZZ$.

Then, there exists an open subset $\cU$ of $\cU_0$
 such that $F_0 \in \cl{\cU}$ and the set
\begin{equation*}
\bigcup_{m \geq n}
\left\{ F \in \cU \mid \#\Lambda^m(F,U_1) \geq a_m \right\}
\end{equation*}
 is open and  dense in $\cU$
 for any given sequence $(a_m)_{m \geq 1}$ of positive integers
 and $n \geq 1$.
\end{proposition}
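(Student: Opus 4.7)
The plan is to deduce Proposition~\ref{prop:abundance 2} from the torus version, Proposition~\ref{prop:abundance}, by projecting through $\pi_T$ and exploiting the injectivity of $\pi_T$ on $(-\pi,\pi)^2 \supset \cl{U_1}$ together with the dynamical-trapping condition (iv).

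Set $\bar{F}_0 := \pi_{T*}(F_0) \in \Diff_\omega(\TT^2)$ and $\bar{p}_0 := \pi_T(p_0)$. Because $\pi_T$ is a local diffeomorphism, $\bar{p}_0$ is a hyperbolic fixed point of $\bar{F}_0^N$ that still exhibits homoclinic tangency. By condition (iii), choose an embedded compact interval $\tilde{J} \subset (\RR \times \{0\}) \cap U_0$ containing both a transverse intersection with $W^s(p_0;F_0)$ and one with $W^u(p_0;F_0)$. Since $\tilde{J} \subset U_0 \subset (-\pi,\pi)^2$, the image $J := \pi_T(\tilde{J})$ is an embedded compact interval in $\TT^2$ with $W^\sigma(\bar{p}_0;\bar{F}_0) \pitchfork J \neq \emptyset$ for $\sigma=s,u$. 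Apply Proposition~\ref{prop:abundance} to $(\bar{F}_0,\bar{p}_0,J)$ to obtain an open set $\cV_* \subset \Diff_\omega(\TT^2)$ with $\bar{F}_0 \in \cl{\cV_*}$ such that, for every sequence $(a_m)_{m \geq 1}$ and every $n \geq 1$, the set $\cV_n := \bigcup_{m \geq n}\{f \in \cV_* : \#[f^m(J) \pitchfork J] \geq a_m\}$ is open and dense in $\cV_*$.

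Now lift $\cV_*$ back to $\cD$. The map $\pi_{T*}$ is a local homeomorphism: its fibres are the orbits of the $(2\pi\ZZ)^2$-translation action, and the twist condition (\ref{eqn:twist}) is $C^1$-open, so after shrinking $\cU_0$ we may assume $\pi_{T*}|_{\cU_0}$ is a homeomorphism onto an open subset of $\Diff_\omega(\TT^2)$. Define $\cU := \cU_0 \cap \pi_{T*}^{-1}(\cV_*)$; this set is open in $\cU_0$, and $F_0 \in \cl{\cU}$ because $\bar{F}_0 \in \cl{\pi_{T*}(\cU_0) \cap \cV_*}$ lifts locally to points arbitrarily close to $F_0$. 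For $F \in \cU$ with $\bar{F} := \pi_{T*}(F)$, each transverse intersection $\bar{q} \in \bar{F}^m(J) \pitchfork J$ satisfies $\bar{q},\,\bar{F}^{-m}(\bar{q}) \in J \subset \pi_T(U_0)$, so both points lift uniquely to $q,p \in \tilde{J} \subset U_0$. Condition (iv) gives $F^j(p) \in U_1 \subset (-\pi,\pi)^2$ for every $j \in \ZZ$, and injectivity of $\pi_T$ on $(-\pi,\pi)^2$ together with $\pi_T(F^m(p)) = \bar{q} = \pi_T(q)$ forces $F^m(p) = q$. Consequently $p \in (\RR \times \{0\}) \pitchfork F^{-m}(\RR \times \{0\})$ and $F^j(p) \in U_1$ for $j=0,\dots,m$, so $p \in \Lambda^m(F,U_1)$. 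Distinct $\bar{q}$'s produce distinct $p$'s, whence $\#\Lambda^m(F,U_1) \geq \#[\bar{F}^m(J) \pitchfork J]$; combined with openness of transverse intersection counts and the pullback of $\cV_n$ through the local homeomorphism $\pi_{T*}$, this yields open density of the required set in $\cU$.

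The principal technical point is verifying that $\pi_{T*}$ is a local homeomorphism onto an open subset of $\Diff_\omega(\TT^2)$, which is what lets the Baire-category conclusion of Proposition~\ref{prop:abundance} pull back cleanly to $\cD$. Everything else is bookkeeping that uses only the containment $\cl{U_1} \subset (-\pi,\pi)^2$ and the trapping condition (iv) to identify intersections on the torus with the transverse intersection points counted by $\Lambda^m(F,U_1)$.
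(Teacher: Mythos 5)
Your proof is correct and follows essentially the same route as the paper: project to $\TT^2$, apply Proposition~\ref{prop:abundance} to the projected interval, use that $\pi_{T*}$ is a covering onto an open subset of $\Diff_\omega(\TT^2)$ to pull back the open dense sets, and use condition (iv) together with $\cl{U_1}\subset(-\pi,\pi)^2$ to identify torus intersections with points of $\Lambda^m(F,U_1)$. The only cosmetic difference is that the paper works directly with $L_0=(\RR\times\{0\})\cap U_0$ and $L_T=\pi_T(L_0)$ and records the count as an equality $\#[L_0\pitchfork F^{-n}(L_0)]=\#[L_T\pitchfork(\pi_{T*}F)^{-n}(L_T)]$, whereas you choose a compact subinterval $\tilde J$ and carry out a pointwise lifting argument to get the inequality $\#\Lambda^m(F,U_1)\geq\#[\bar F^m(J)\pitchfork J]$; both are adequate and logically equivalent here.
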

\begin{proof}
Put $f_0=\pi_{T*}(F_0)$, $p_T=\pi_T(p_0)$,
 $L_0=(\RR \times \{0\}) \cap U_0$, and $L_T=\pi_T(L_0)$.
The hyperbolic fixed point $p_T$ of $f_0^N$ exhibits homoclinic tangency.
By assumption,
 $W^\sigma(p_T;f_0) \pitchfork L_T \neq \emptyset$ for $\sigma=s,u$.
Applying Proposition \ref{prop:abundance},
 we obtain an open subset $\cU_T$ of $\Diff_\omega(\TT^2)$
 such that $f_0 \in \cl{\cU_T}$ and the set
\begin{equation*}
 \cU_n=\bigcup_{m \geq n}
\left\{f \in \cU_T \mid
 \#\left[L_T \pitchfork f^{-m}(L_T)\right] \geq a_m \right\}
\end{equation*}
 is open and dense in $\cU_T$
 for any sequence $(a_m)_{m \geq 1}$ of positive integers
 and $n \geq 1$.

The set $\pi_{T*}(\cD)$ is an open subset of $\Diff_\omega(\TT^2)$,
 and $\pi_{T*}$ is a covering map
 onto $\pi_{T*}(\cD)$.
Hence, there exists an open subset $\cU$ of $\cU_0$ 
 such that $F_0 \in \cl{\cU}$,
 the restriction of $\pi_{T*}$ to $\cU$
 is a homeomorphism onto a open subset of $\cU_T$.
Since $L_0=\pi_T^{-1}(L_T) \cap U_0$
 and $F^n(U_0) \subset U_1$ for any $F \in \cU$ and $n \in \ZZ$,
 we have $F^n(L_0) \subset U_1$.
This implies that
 $L_0 \pitchfork F^{-n}(L_0) \subset \bigcap_{m=0}^n F^{-m}(U_1)$,
 and hence,
\begin{align*}
 \# \Lambda^n(F,U_1)
 & \geq \#\left[L_0 \pitchfork F^{-n}(L_0)\right]
  = \#\left[L_T \pitchfork (\pi_{T*}(F))^{-n}(L_T) \right].
\end{align*}
Since $\pi_{T*}$ maps $\cU$ to an open subset of $\cU_T$ homeomorphically,
 the set
\begin{equation*}
 \bigcup_{m \geq n}\left\{F \in \cU \mid
 \#\Lambda^m(F,U_1) \geq a_m\right\}
\end{equation*}
 is open and dense in $\cU$
 for any given sequence $(a_m)_{m \geq 1}$ of positive integers
 and any $n \geq 1$.
\end{proof}

Next, we see that the existence of a diffeomorphism $F_0$
 satisfying Proposition \ref{prop:abundance 2} implies Theorem \ref{thm:growth}.
Let $U_0$ and $\cU$ be open subsets of $(-\pi,\pi)^2$ and $\cD$
 in Proposition \ref{prop:abundance 2}.
Put $\cU_H=\Phi^{-1}(\cU)$, where $(\Phi,\ev_0):\cH \ra \cD \times \RR$
 is the homeomorphism given in Proposition \ref{prop:generating}.
Take a compact interval $I \subset (-\pi,\pi)$ such that
 $U_0 \subset I \times \RR$.
By Corollary \ref{cor:Crit},
\begin{equation*}
 \#\left[\Cr(H_n) \cap I^{n+1} \right] \geq \#\Lambda^{n}(\Phi(H),U_0)
\end{equation*}
 for any $H \in \cU_H$.
Fix a sequence $(a_m)_{m \geq 1}$ of positive integers.
Then, the set
\begin{equation*}
\cU_n= \bigcup_{m \geq n}\left\{H \in \cU_H \mid
 \#[\Cr(H_m) \cap I^{m+1}] \geq a_m\right\}
\end{equation*}
 contains an open dense subset of $\cU_H$ for any $n \geq 1$.

Let $C^\infty(I^2)$ be the set of functions on $I^2$
 which extends to a $C^\infty$ function on an open neighborhood of $I^2$.
Recall that we identify $S^1$ with $\RR/2\pi \ZZ$.
Let $r_S:C^\infty(S^1\times S^1) \ra C^\infty(I^2)$
 and $r_H:\cH \ra C^\infty(I^2)$
 be the maps induced by the restriction of functions.
They are continuous and open.
Hence, the set $\cU_S=r_S^{-1}(r_H(\cU_H))$ is open in
 $C^\infty(S^1 \times S^1)$
Moreover, if $\cU'$ is an open dense subset of $\cU_H$,
 then $r_S^{-1}(r_H(\cU'))$ is also open and dense in $\cU_S$.
Since
\begin{equation*}
\#\left[\Cr(f_n) \cap I^{n+1}\right] 
 = \#\left[\Cr(H_n) \cap I^{n+1}\right]
\end{equation*}
 for $f \in \cU_S$ and $H \in \cU_H$ with $r_S(f)=r_H(H)$,
 the set
\begin{equation*}
\cU'_n=\bigcup_{m \geq n} \left\{f \in \cU_S \mid
 \#[\Cr(f_m) \cap I^{m+1}] \geq a_m \right\}
 \supset r_S^{-1}(r_H(\cU_n))
\end{equation*}
 contains an open dense subset of $\cU_S$
 for any given sequence $(a_m)_{m \geq 1}$ of positive integers
 and any $n \geq 1$.
Therefore, the set
\begin{equation*}
 \left\{f \in \cU_S
 \mid \limsup_{n \ra \infty} \frac{\#\Cr(f_n)}{a_n} \geq 1
 \right\}
 \supset \bigcap_{n \geq 1}\cU_n'
\end{equation*}
 is residual in $\cU_S$.
This is just the statement of Theorem \ref{thm:growth}.
 
Finally, we construct a diffeomorphism in $\cD$ which satisfies
 the assumption of Proposition \ref{prop:abundance 2}.
Put $O=(0,0)$ and $p_0=(1,0)$.
For $r>0$, we define a disk $D'(r)$ and an annulus $A(r)$ in $\RR^2$ by
\begin{align*}
D'(r) & =\{q \in \RR^2 \mid \|q-p_0\| \leq r\}\\
A(r) & =\{q \in \RR^2 \mid 2-r \leq \|q\| \leq 2+r\}.
\end{align*}
Here $\|\cdot\|$ is the Euclidean norm.
Fix $\delta=1/6$ and let $G$ be a $(2\pi\ZZ)^2$-periodic
 $C^\infty$ function on $\RR^2$ such that
\begin{equation*}
G(x,y)=
\begin{cases}
(x^2+y^2)^2 & \text{for }(x,y) \in A(\delta)\\
y^2+\left(x-1\right)^2
 \left(x-\left(1+\delta\right)\right)
 & \text{for }(x,y) \in D'(2\delta),
\end{cases}  
\end{equation*}
 and 
\begin{equation*}
 \text{supp}(G) \cap (-\pi,\pi)^2 \subset A(2\delta) \cup D'(3\delta).
\end{equation*}
Let $\Psi_t$ be the flow generated by a vector field
 $X_G=(\del_2 G)\del_1 - (\del_1 G) \del_2$.
It is an area-preserving flow satisfying the following properties
 for any $t>0$:
\begin{enumerate}
\item[(i)] $\text{supp}(\Psi_t) \cap (-\pi,\pi)^2
 \subset A(2\delta) \cup D'(3\delta)$.
\item[(ii)] $\Psi_t(r\cos\theta, r\sin\theta)
 =(r\cos(\theta-4r^2t),r\sin(\theta-4r^2t))$
 for any $(r\cos\theta,r\sin\theta) \in A(\delta)$.
\item[(iii)] $p_0$ is a hyperbolic fixed point of $\Psi_t$.
\item[(iv)] $\{(x,y) \in D'(2\delta) \mid x \geq 1,
 y^2+(x-1)^2(x-(1+\delta))=0\}$ is contained in
 $W^s(p_0;\Psi_t) \cap W^u(p_0;\Psi_t)$.
\end{enumerate}
Remark that the last item implies that
 $p_0$ exhibits homoclinic tangency
 and $W^\sigma(p_0;\Psi_t) \pitchfork (\RR \times \{0\}) \neq \emptyset$
 for $\sigma=s,u$.

Since $\cD$ is an open subset of the set of diffeomorphisms
 $F \in \Diff_\omega(\RR^2)$ such that
 $F-\Theta$ is $(2\pi\ZZ)^2$-periodic,
 there exists small $T>0$ such that $\Theta \circ \Psi_T \in \cD$.
Put $F_0=\Theta \circ \Psi_T$.
Since $F_0^4=\Psi_T$ on $D'(2\delta)$,
 $p_0$ is a hyperbolic fixed point of $F_0^4$
 and it exhibits homoclinic tangency.
For $(r\cos\theta,r\sin\theta) \in A(\delta)$, we have
\begin{equation*}
 F_0^4(r\cos\theta,r\sin\theta)=\Psi_T^4(r\cos\theta, r\sin\theta)
 =(r\cos(\theta-16r^2 T),r \sin(\theta-16r^2 T)).
\end{equation*}
The Kolmogorov-Arnold-Moser Theorem on persistence of
 invariant circle (see {\it e.g.,} \cite{Mo}) implies that
 there exist a neighborhood $\cU_0$ of $F_0$ such that
 any $F \in \cU_0$ admits an $F$-invariant circle $C(F) \subset A(\delta)$ 
 which winds the annulus once.
This implies that open disks $U_0=\{q \in \RR^2 \mid \|q\| < 2-\delta\}$
 and $U_1=\{q \in \RR^2 \mid \|q\| < 2+\delta\}$ satisfy
 that $F^n(U_0) \subset U_1$ for any $F \in \cU_0$ and $n \in \ZZ$.
Therefore, $F_0$ and $p_0$ satisfy the assumption of
 Proposition \ref{prop:abundance 2}.
As we mentioned in the above, it completes the proof of Theorem \ref{thm:growth}.

\section{Proof of Theorem \ref{thm: stable region}} \label{section: proof of stable region}
We denote the natural coordinate of $(S^1)^{n+1} = (\mathbb{R}/2\pi \mathbb{Z})^{n+1}$ 
by $(x_0, x_1,\dots, x_n)$.
Let $F:(S^1)^{n+1}\to \mathbb{R}$ be a smooth function.
For $x\in (S^1)^{n+1}$, we set 
\[\hess_x F := \left(\frac{\partial^2 F(x)}{\partial x_i\partial x_j}\right)_{0\leq i,j\leq n}.\]
This is a $(n+1,n+1)$-matrix. We set 
\[ M(F)(x) := \max_{0\leq i\leq n}\left|\frac{\partial F(x)}{\partial x_i}\right|
   + \inf_{u\in \mathbb{R}^{n+1}, \|u\|=1} \|\hess_x F(u)\|.\]
$F$ is a Morse function if and only if $M(F)(x)>0$ for all $x\in (S^1)^{n+1}$.
(Here $\|\cdot\|$ is the Euclidean norm in $\mathbb{R}^{n+1}$.)

Let $f:S^1\times S^1\to \mathbb{R}$ be a smooth function.
It is easy to check that for any $n\geq 1$
\begin{equation} \label{eq: hessian estimate}
 \sup_{0\leq i\leq n}\left|\frac{\partial f_n(x)}{\partial x_i}\right| \leq 2\norm{\nabla f}_\infty,\quad 
 \|\hess_x (f_n) u\| \leq 5\norm{\nabla^2 f}_\infty \|u\|
\quad (u\in \mathbb{R}^{n+1}),
\end{equation}
where $\norm{\nabla f}_{\infty}$ is the supremum of $|\partial f(x)/\partial x_i|$ over $x\in S^1\times S^1$ and 
$0\leq i\leq 1$, and
$\norm{\nabla^2f}_\infty$ is the supremum of 
$|\partial^2 f (x)/\partial x_i\partial x_j|$ over $x\in S^1\times S^1$ and $0\leq i,j\leq 1$.

Let $h:S^1\to \mathbb{R}$ be a Morse function.
We define a positive number $K$ by
\[ K := \inf_{x\in S^1} M(h)(x) >0.\]
We define 
$g: S^1\times S^1\to \mathbb{R}$ by $g(x,y) := h(x)+h(y)$.
Then 
$g_n(x_0,x_1,\dots,x_n) = h(x_0)+ 2h(x_1)+\dots+2h(x_{n-1})+h(x_n)$.
It is easy to see 
\[ \#\mathrm{Cr}(g_n) = (\#\mathrm{Cr}(h))^{n+1}.\]

The Hessian $\hess_x (g_n)$ is the diagonal matrix $\mathrm{diag}(h''(x_0),2h''(x_1),\dots,2h''(x_{n-1}),h''(x_n))$.
Hence
\[ \inf_{u\in \mathbb{R}^{n+1}, \|u\|=1} 
\|\hess_x g_n(u)\| = 
   \min(|h''(x_0)|, 2|h''(x_1)|,\dots, 2|h''(x_{n-1})|, |h''(x_n)|).\]
\begin{equation*}
  \begin{split}
    M(g_n)(x) = &\max(|h'(x_0)|,2|h'(x_1)|,\dots,2|h'(x_{n-1})|,|h'(x_n)|) \\ 
                   &+ \min(|h''(x_0)|, 2|h''(x_1)|,\dots, 2|h''(x_{n-1})|, |h''(x_n)|).
  \end{split}
\end{equation*}
This is bounded from below by 
\begin{equation*}
  \begin{split}
   \min(|h'(x_0)|+|h''(x_0)|,& 2|h'(x_1)|+2|h''(x_1)|,\dots,\\
   &2|h'(x_{n-1})|+2|h''(x_{n-1})|, |h'(x_n)|+|h''(x_n)|) \geq K.
  \end{split}
\end{equation*}
Therefore we get $M(g_n)(x)\geq K$ for all $n\geq 1$ and $x\in (S^1)^{n+1}$.

We define a open set $\mathcal{V}\subset C^\infty(S^1\times S^1)$ as the set of 
$f \in C^\infty(S^1\times S^1)$ satisfying 
$2\norm{\nabla (f-g)}_\infty + 5\norm{\nabla^2 (f-g)}_{\infty} < K$.
We will prove that for all $f\in \mathcal{V}$ and $n\geq 1$
the functions $f_n: (S^1)^{n+1}\to \mathbb{R}$ are Morse functions and 
\begin{equation} \label{eq: the number of critical points are invariant}
 \#\mathrm{Cr}(f_n) = \#\mathrm{Cr}(g_n) =  (\#\mathrm{Cr}(h))^{n+1}.
\end{equation}

Let $f\in \mathcal{V}$.
By using (\ref{eq: hessian estimate})
\[ \left|\frac{\partial f_n}{\partial x_i}\right| \geq \left|\frac{\partial g_n}{\partial x_i}\right| - 
  2\norm{\nabla (f-g)}_\infty.\]
For any $u\in \mathbb{R}^{n+1}$ with $\|u\|=1$,
\[ \|\hess_x(f_n)u\| \geq \|\hess_x(g_n)u\| - 5\norm{\nabla^2 (f-g)}_\infty.\]
Therefore 
\[ M(f_n)(x) \geq M(g_n)(x) - 2\norm{\nabla (f-g)}_\infty - 5\norm{\nabla^2 (f-g)}_\infty > K-K=0.\]
Hence $M(f_n)(x)>0$ for all $x\in (S^1)^{n+1}$.
This shows that $f_n$ is a Morse function.

For all $t\in [0,1]$, the functions $t g + (1-t)f :S^1\times S^1\to \mathbb{R}$ are 
contained in $\mathcal{V}$.
Since non-degenerate critical points are persistent, this implies the equation 
(\ref{eq: the number of critical points are invariant}).

\vspace{10mm}

\address{ Masayuki Asaoka \endgraf
Department of Mathematics, Kyoto University, Kyoto 606-8502, Japan}

\textit{E-mail address}: \texttt{asaoka@math.kyoto-u.ac.jp}

\vspace{0.5cm}

\address{ Tomohiro Fukaya \endgraf 
Mathematical Institute, Tohoku University, Sendai 980-8578,
Japan}

\textit{E-mail address}: \texttt{tomo@math.tohoku-u.ac.jp}

\vspace{0.5cm}

\address{ Kentaro Mitsui \endgraf
Graduate School of Mathematical Sciences, the University of Tokyo,
Tokyo 153-8914, Japan}

\textit{E-mail address}: \texttt{mitsui@ms.u-tokyo.ac.jp}

\vspace{0.5cm}

\address{ Masaki Tsukamoto \endgraf
Department of Mathematics, Kyoto University, Kyoto 606-8502, Japan}

\textit{E-mail address}: \texttt{tukamoto@math.kyoto-u.ac.jp}


\begin{thebibliography}{99}



\bibitem{Artin-Mazur}
M. Artin, B. Mazur, 
On periodic points, 
Ann. of Math. \textbf{81} (1965), 82-99








\bibitem{Asaoka-Fukaya-Tsukamoto}
M. Asaoka, T. Fukaya, M. Tsukamoto,
Remark on dynamical Morse inequality, 
Proc. Japan Acad. \textbf{87} Ser. A (2011). 178-182




\bibitem{Bertelson}
M. Bertelson,
Topological invariant for discrete group actions,
Lett. Math. Phys. \textbf{62} (2004), 147-156




\bibitem{Bertelson-Gromov}
M. Bertelson, M. Gromov, 
Dynamical Morse entropy,
in: Modern dynamical systems and applications, 27-44,
Cambridge Univ. Press, Cambridge, 2004




\bibitem{Bochnak-Coste-Roy}
J. Bochnak, M. Coste, M-F. Roy, 
Real algebraic geometry,
Translated from the 1987 French original, revised by the authors,
Springer-Verlag, Berlin, 1998



\bibitem{Du}
 P. Duarte,
 Abundance of elliptic isles at conservative bifurcations,
  Dynam. Stability Systems 14 (1999), no. 4, 339--356.


\bibitem{Fukaya-Tsukamoto}
T. Fukaya, M. Tsukamoto,
Asymptotic distribution of critical values,
Geom. Dedicata \textbf{143} (2009), 63-67




\bibitem{Fulton}
W. Fulton,
Intersection theory,
Springer-Verlag, Berlin, 1984




\bibitem{GTS}
S. Gonchenko, D. Turaev, L. Shilnikov, 
 Homoclinic tangencies of arbitrarily high orders
 in conservative and dissipative two-dimensional maps,
 Nonlinearity 20 (2007), no. 2, 241--275.





\bibitem{Hartshorne}
R. Hartshorne, 
Algebraic geometry, 
Springer-Verlag, 
New York-Heidelberg, 1977






\bibitem{Hironaka}
H. Hironaka,
Resolution of singularities of an algebraic variety over a field of characteristic zero,
Ann. of Math. \textbf{79} (1964), 109-326



\bibitem{Kaloshin 1}
V. Y. Kaloshin,
An extension of the Artin-Mazur theorem,
Ann. of Math. \textbf{150} (1999), 729-741



\bibitem{Kaloshin 2}
V. Y. Kaloshin,
Generic diffeomorphisms with superexponential growth of the number of periodic orbits,
Comm. Math. Phys. \textbf{211} (2000), 253-271



\bibitem{KH}
 A.~Katok and B.~Hasselblatt,
 Introduction to the modern theory of dynamical systems,
 Encyclopedia of Mathematics and its Applications,
 54. {\it Cambridge University Press, Cambridge}, 1995.




\bibitem{King}
H. King, 
Approximating submanifolds of real projective spaces by varieties,
Topology \textbf{15} (1976), 81-85



\bibitem{Kucharz}
W. Kucharz,
Transcendental submanifolds of projective space,
Comment. Math. Helv. \textbf{84} (2009), 127-133





\bibitem{Milnor}
J. Milnor, 
Morse theory,
Annals of Mathematics Studies, \textbf{51}, Princeton University Press,
Princeton, 1963




\bibitem{Milnor-real varieties}
J. Milnor, 
On the Betti numbers of real varieties,
Proc. Amer. Math. Soc. \textbf{15} (1964), 275-280



\bibitem{Mo}
J. Moser, 
 On invariant curves of area-preserving mappings of an annulus,
 Nachr. Akad. Wiss. G\"{o}ttingen Math.-Phys. Kl. II 1962 1962 1--20.



\bibitem{Mumford-red book}
D. Mumford, 
The red book of varieties and schemes,
Second expanded edition, 
Lecture Notes in Mathematics, \textbf{1358},
Springer-Verlag, Berlin, 1999




\bibitem{Nash}
J. Nash, 
Real algebraic manifolds,
Ann. of Math. \textbf{56} (1952), 405-421



\bibitem{Ro}
C. Robinson, Dynamical systems.
 Stability, symbolic dynamics, and chaos, Second edition,
 Studies in Advanced Mathematics. CRC Press, Boca Raton, FL, 1999. 




\bibitem{Tognoli}
A. Tognoli,
Su una congettura di Nash, 
Ann. Scuola Norm. Sup. Pisa, Sci. Fis. Mat. \textbf{27} (1973), 167-185














\end{thebibliography}
\end{document}